\theoremstyle{plain}
\newtheorem{remark}{Remark}
\newcommand{\R}{\mathbb{R}}
\newcommand{\CE}{\mathbb{C}}
\newcommand{\vl}{\left\lVert}
\newcommand{\vi}{\right\rVert}
\newcommand{\diag}{\text{diag}}
\newenvironment{psmallmatrix}{\left(\begin{smallmatrix}}{\end{smallmatrix}\right)}
\newtheorem{theorem}{Theorem}[section]
\newtheorem{corollary}{Corollary}[section]
\newtheorem{example}{Example}[section]
\newtheorem{lemma}{Lemma}[section]
\newcommand{\tref}[1]{Theorem~\textup{\ref{#1}}}
\newcommand{\cref}[1]{Corollary~\textup{\ref{#1}}}
\newcommand{\lref}[1]{Lemma~\textup{\ref{#1}}}
\newcommand{\rref}[1]{Remark~\textup{\ref{#1}}}
\begin{document}
\begin{center}
\textbf{\LARGE  Symmetric Norm Inequalities And Positive\vspace{4pt}\\ Semi-Definite Block-Matrices }
\end{center}
 \begin{center}
\emph{Antoine Mhanna$^1$ }
\end{center}
  \begin{center} \emph{1 Dept of Mathematics, Lebanese University, Hadath, Beirut, Lebanon.
}\end{center}
\begin{center}
\emph{tmhanat@yahoo.com}\end{center}

\centerline{\textbf{ Abstract}}\vspace{08pt}
 For positive semi-definite block-matrix  $M,$ we say that $M$ is P.S.D. and  we write  $M=\begin{pmatrix} A & X\\ {X^*} & B\end{pmatrix} \in {\mathbb{M}}_{n+m}^+$, with $A\in {\mathbb{M}}_n^+$, $B \in {\mathbb{M}}_m^+.$ The focus is on  studying  the consequences of a  decomposition lemma due to C.~Bourrin  and the main result is   extending the class of   P.S.D. matrices $M$ written by blocks of  same size  that satisfies the inequality: $\|M\|\le \|A+B\|$ for all symmetric norms.\\
\textbf{Keywords} : Matrix Analysis, Hermitian matrices, symmetric norms.
\section{Introduction}
Let $A$ be an $n\times n$ matrix  and  $F$ an  $m\times m$ matrix, $(m>n)$ written by blocks such that  $A$ is a diagonal block and all entries other than those of $A$ are zeros, then the two matrices have the same  singular values and  for all unitarily invariant norms $\|A\|=\|F\|=\|A\oplus0\|$, we say then that the  symmetric norm on ${\mathbb{M}}_{m}$ induces a symmetric norm on ${\mathbb{M}}_{n}$, so for square matrices  we may assume that our norms are defined on all spaces ${\mathbb{M}}_n,$           $  n\ge 1.$
The spectral norm is denoted by ${\|.\|}_s,$ the Frobenius norm by ${\|.\|}_{(2)},$ and the Ky Fan $k-$norms by ${\|.\|}_{k}.$
Let ${\mathbb{M}}_n^+$ denote the set of positive and semi-definite part of the space of $n\times n$ complex matrices and $M$  be any positive semi-definite block-matrices; that is, 
$M=\begin{pmatrix} A & X\\ {X^*} & B\end{pmatrix} \in {\mathbb{M}}_{n+m}^+$, with $A\in {\mathbb{M}}_n^+$, $B \in {\mathbb{M}}_m^+.$
\section{Decomposition of block-matrices}
\begin{lemma}\label{prince}
For every matrix $M$  in  ${\mathbb{M}}_{n+m}^+$ written in blocks, we have the decomposition:
$\begin{pmatrix} A & X\\ {X^*} & B\end{pmatrix}=U\begin{pmatrix} A & 0\\ {0} & 0 \end{pmatrix}U^*+V\begin{pmatrix} 0 & 0\\ {0} & B \end{pmatrix}V^*$\\
for some unitaries $U, V\in {\mathbb{M}}_{n+m}.$
\end{lemma}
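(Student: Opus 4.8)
The plan is to realize $M$ as $R^{*}R$ for a cleverly chosen square matrix $R$ and to read off the decomposition from a block partition of $R$. Concretely, I would take $R=M^{1/2}$, which is positive (hence Hermitian), so that $M=R^{*}R=RR^{*}$. Partition $R$ by rows as $R=\begin{pmatrix} C_1\\ C_2\end{pmatrix}$ with $C_1$ of size $n\times(n+m)$ and $C_2$ of size $m\times(n+m)$. Computing the block product $RR^{*}=\begin{pmatrix} C_1C_1^{*} & C_1C_2^{*}\\ C_2C_1^{*} & C_2C_2^{*}\end{pmatrix}$ and comparing with $M=\begin{pmatrix} A & X\\ X^{*} & B\end{pmatrix}$ shows $A=C_1C_1^{*}$ and $B=C_2C_2^{*}$, while computing $R^{*}R$ yields the key identity $M=C_1^{*}C_1+C_2^{*}C_2$.

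Next I would ``square up'' each $C_i$ by padding with zero rows: set $C_1'=\begin{pmatrix} C_1\\ 0\end{pmatrix}$ and $C_2'=\begin{pmatrix} 0\\ C_2\end{pmatrix}$ in ${\mathbb{M}}_{n+m}$. Appending zero rows does not affect the product by the adjoint on the left, so $(C_1')^{*}C_1'=C_1^{*}C_1$ and $(C_2')^{*}C_2'=C_2^{*}C_2$; on the other hand, multiplying in the other order gives $C_1'(C_1')^{*}=\begin{pmatrix} A & 0\\ 0 & 0\end{pmatrix}$ and $C_2'(C_2')^{*}=\begin{pmatrix} 0 & 0\\ 0 & B\end{pmatrix}$.

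Now I would invoke the elementary fact that for any square matrix $Y$ the matrices $Y^{*}Y$ and $YY^{*}$ are unitarily equivalent: from a polar decomposition $Y=WP$ with $W$ unitary and $P=(Y^{*}Y)^{1/2}$ one gets $YY^{*}=WP^{2}W^{*}=W(Y^{*}Y)W^{*}$. Applying this to $Y=C_1'$ and to $Y=C_2'$ produces unitaries $U,V\in{\mathbb{M}}_{n+m}$ with $C_1^{*}C_1=U\begin{pmatrix} A & 0\\ 0 & 0\end{pmatrix}U^{*}$ and $C_2^{*}C_2=V\begin{pmatrix} 0 & 0\\ 0 & B\end{pmatrix}V^{*}$. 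Substituting these into $M=C_1^{*}C_1+C_2^{*}C_2$ finishes the proof.

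There is no serious obstacle here; the points requiring care are the bookkeeping of block sizes when passing between the rectangular $C_i$ and the square $C_i'$, and the observation that the ``right'' square root to start from is $M^{1/2}$ itself (so that both $R^{*}R$ and $RR^{*}$ return $M$), rather than an arbitrary $R$ with $M=R^{*}R$, which would force an extra conjugation. As an alternative to the polar decomposition one may simply note that $(C_i')^{*}C_i'$ and $C_i'(C_i')^{*}$ have the same nonzero spectrum and the same size $n+m$, hence the same multiset of eigenvalues, and are therefore unitarily equivalent.
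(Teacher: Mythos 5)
Your proof is correct and follows essentially the same route as the paper: both factor $M$ through its positive square root, split that root into its top and bottom block-rows padded with zeros (your $C_1', C_2'$ are exactly the paper's $T$ and $S$), observe that $M=T^*T+S^*S$ while $TT^*=A\oplus 0$ and $SS^*=0\oplus B$, and conclude via the unitary equivalence of $Y^*Y$ and $YY^*$. The only difference is cosmetic bookkeeping (rectangular blocks then padding versus writing the padded square matrices directly).
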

\begin{proof}
Factorize the positive matrix as a square of positive matrices:$$
\begin{pmatrix} A & X\\ {X^*} & B\end{pmatrix}=\begin{pmatrix} C & Y\\ {Y^*} & D\end{pmatrix}.\begin{pmatrix} C & Y\\ {Y^*} & D\end{pmatrix}$$
we verify that the right hand side can be written as $ T^*T+S^*S$ so :$$\begin{pmatrix} C & Y\\ {Y^*} & D\end{pmatrix}.\begin{pmatrix} C & Y\\ {Y^*} & D\end{pmatrix}= \underbrace{\begin{pmatrix} C & 0\\ {Y^*} & 0\end{pmatrix}}_{T^*}.\underbrace{\begin{pmatrix} C & Y\\ {0} & 0\end{pmatrix}}_{T}+\underbrace{\begin{pmatrix} 0 & Y\\ {0} & D\end{pmatrix}}_{S^*}.\underbrace{\begin{pmatrix} 0 & 0\\ {Y^*} & D\end{pmatrix}}_{S}.$$ Since  $TT^*=\begin{pmatrix}CC+YY^*&0\\0&0  \end{pmatrix}=\begin{pmatrix} A & 0\\ {0} & 0 \end{pmatrix},$  $SS^*=\begin{pmatrix} 0 & 0\\ 0& Y^*Y+DD \end{pmatrix}=\begin{pmatrix} 0 & 0\\ {0} & B \end{pmatrix}$ and 
$AA^*$ is unitarily congruent to $A^*A$ for any square matrix $A,$	 the lemma follows.
 \end{proof}
\begin{remark}\label {impol}As a consequence of this lemma we have:$$\|M\|\le \|A\| +\|B\|$$  for all symmetric norms.
\end{remark}
Equations involving unitary matrices  are called unitary orbits representations. Recall that if $A\in {\mathbb{M}}_n$,  $R(A)=\dfrac{A+A^*}{2}$ and $I(A)=\dfrac{A-A^*}{2i}.$
\begin{corollary}\label{mtlo}
For every matrix in ${\mathbb{M}}_{2n}^+ $ written in blocks of the same size, we have the  decomposition: 
$$\begin{pmatrix} A & X\\ {X^*} & B\end{pmatrix}=U\begin{pmatrix} \frac{A+B}{2}-R(X) & 0\\ {0} & 0 \end{pmatrix}U^*+V\begin{pmatrix} 0 & 0\\ {0} &\frac{A+B}{2}+R(X) \end{pmatrix}V^*$$
for some unitaries $U, V\in {\mathbb{M}}_{2n}.$
\end{corollary}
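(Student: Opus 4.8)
The plan is to deduce this from \lref{prince} by a single unitary change of basis that mixes the two diagonal blocks $A$ and $B$ of $M$ into $\tfrac{A+B}{2}\mp R(X)$. Concretely, I would introduce the unitary
\[ W=\frac{1}{\sqrt{2}}\begin{pmatrix} I_n & I_n\\ -I_n & I_n\end{pmatrix}\in{\mathbb{M}}_{2n}, \qquad WW^*=W^*W=I_{2n}, \]
and set $N:=W^*MW$. Since $M\in{\mathbb{M}}_{2n}^+$ and $W$ is unitary, $N\in{\mathbb{M}}_{2n}^+$ as well, so \lref{prince} is applicable to $N$ once we have identified its blocks.

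The key (and essentially only) computation is that conjugation by $W$ mixes the blocks exactly as needed: a direct multiplication gives
\[ N=W^*\begin{pmatrix} A & X\\ X^* & B\end{pmatrix}W=\frac{1}{2}\begin{pmatrix} A+B-X-X^* & A-B+X-X^*\\ A-B-X+X^* & A+B+X+X^*\end{pmatrix}, \]
so that the $(1,1)$ block of $N$ is $\tfrac{A+B}{2}-R(X)$ and its $(2,2)$ block is $\tfrac{A+B}{2}+R(X)$; the off-diagonal blocks play no role. In particular these two diagonal blocks are automatically positive semi-definite, being diagonal blocks of the P.S.D. matrix $N$, which is consistent with applying the lemma.

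Now apply \lref{prince} to $N$, written in its two $n\times n$ blocks: there exist unitaries $U_0,V_0\in{\mathbb{M}}_{2n}$ with
\[ N=U_0\begin{pmatrix} \frac{A+B}{2}-R(X) & 0\\ 0 & 0\end{pmatrix}U_0^*+V_0\begin{pmatrix} 0 & 0\\ 0 & \frac{A+B}{2}+R(X)\end{pmatrix}V_0^*. \]
Conjugating this identity by $W$ and using $M=WNW^*$ gives the claimed decomposition with $U:=WU_0$ and $V:=WV_0$, which are unitary as products of unitaries.

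The only step requiring any insight is guessing the conjugating unitary $W$; once it is in hand, the proof reduces to the block multiplication above together with \lref{prince}, so I do not expect a genuine obstacle. (One may equally take $W'=\tfrac{1}{\sqrt{2}}\begin{pmatrix} I_n & -I_n\\ I_n & I_n\end{pmatrix}$ and conjugate as $W'MW'^*$; the choice of $W$ versus $W'$ governs the sign of $R(X)$ attached to each summand.)
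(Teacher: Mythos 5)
Your proof is correct and follows essentially the same route as the paper: conjugating $M$ by the rotation $J=\frac{1}{\sqrt{2}}\left(\begin{smallmatrix} I & -I\\ I & I\end{smallmatrix}\right)$ (your $W^*$) to produce the diagonal blocks $\frac{A+B}{2}\mp R(X)$, and then invoking the decomposition of \lref{prince}. Your version is in fact cleaner, since you apply \lref{prince} to the conjugated matrix as a black box and conjugate back, whereas the paper re-derives the $T^*T+S^*S$ splitting explicitly while carrying $J$ through the computation.
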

\begin{proof}
Let $J=\dfrac{1}{\sqrt{2}}\begin{pmatrix} I & -I\\ {I} & I\end{pmatrix}$ where $I$ is the identity of ${\mathbb{M}}_n$, $J$ is a unitary matrix, and we have:$$
J\begin{pmatrix} A & X\\ {X^*} & B\end{pmatrix}J^*=\underbrace{\begin{pmatrix} \frac{A+B}{2}-R(X) & \frac{A-B}{2}+\frac{ X^*-X}{2}\\[0.5cm]  \frac{A-B}{2}-\frac{X-X^*}{2} & \frac{A+B}{2}+R( X)\end{pmatrix}}_N$$
Now we factorize $N$ as a square of positive matrices:
$$\begin{pmatrix} A & X\\ {X^*} & B\end{pmatrix}=J^*\begin{pmatrix} L & M\\ {M^*} & F\end{pmatrix}.\begin{pmatrix} L & M\\ {M^*} & F\end{pmatrix}J$$
and let:
$$ \begin{array}{rcl}\delta &=J^*\begin{pmatrix} L & M\\ {M^*} & F\end{pmatrix}& =\frac{1}{\sqrt{2}}\begin{pmatrix} L+M^* & M+F\\ {M^*}-L & F-M\end{pmatrix}\\ \psi &=\begin{pmatrix} L & M\\ {M^*} & F\end{pmatrix}J&= \frac{1}{\sqrt{2}}\begin{pmatrix} L+M & M-L\\ F+{M^*} & F-{M^*}\end{pmatrix}\end{array}$$
A direct computation shows that: \begin{align}\delta .\psi &=\dfrac{1}{2}\begin{psmallmatrix}(L+M^*)(L+M)+(M+F)(F+M^*)\text{    } &\text{     } (L+M^*)(M-L)+(M+F)(F-M^*)\\[0.7cm] (M^*-L)(L+M)+(F-M)(F+M^*)\text{         } &\text{    } (M^*-L)(M-L)+(F-M)(F-M^*)\end{psmallmatrix}\nonumber\\&={\Gamma}^*\Gamma + {\Phi}^*\Phi \end{align}where: $\Gamma=\dfrac{1}{\sqrt{2}}\begin{pmatrix}L+M & M-L\\ 0 &0\end{pmatrix},$ and $ {\Phi}=\dfrac{1}{\sqrt{2}}\begin{pmatrix}0& 0\\F+M^* & F-M^*\end{pmatrix} $  to finish notice that for any square matrix $A$,  ${A^*A} $ is unitarily congruent to ${A}{A }^*$
and, ${\Gamma}{\Gamma}^*$, $ {\Phi}{\Phi}^*$  have the required form.
\end{proof}
The previous corollary implies that $\frac{A+B}{2}\ge R(X)$ and  $\frac{A+B}{2}\ge -R(X).$    
\begin{corollary}\label{woa}
For every matrix in ${\mathbb{M}}_{2n}^+ $ written in blocks of the same size, we have the  decomposition: $$\begin{pmatrix} A & X\\ {X^*} & B\end{pmatrix}=U\begin{pmatrix} \frac{A+B}{2}+I(X) & 0\\ {0} & 0 \end{pmatrix}U^*+V\begin{pmatrix} 0 & 0\\ {0} &\frac{A+B}{2}-I(X) \end{pmatrix}V^*$$
for some unitaries $U, V\in {\mathbb{M}}_{n+m}.$
\end{corollary}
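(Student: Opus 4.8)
The plan is to mimic exactly the proof of Corollary \ref{mtlo}, replacing the unitary $J=\frac{1}{\sqrt2}\begin{pmatrix} I & -I\\ I & I\end{pmatrix}$ by a unitary that conjugates $M$ into a block form whose diagonal entries are $\frac{A+B}{2}+I(X)$ and $\frac{A+B}{2}-I(X)$ instead of $\frac{A+B}{2}\mp R(X)$. The natural candidate is $K=\frac{1}{\sqrt2}\begin{pmatrix} I & iI\\ iI & I\end{pmatrix}$ (or its adjoint); one checks $K$ is unitary, and a direct computation of $K\begin{pmatrix} A & X\\ X^* & B\end{pmatrix}K^*$ should produce a matrix $N'$ whose $(1,1)$ block is $\frac{A+B}{2}+\frac{X-X^*}{2i}=\frac{A+B}{2}+I(X)$ and whose $(2,2)$ block is $\frac{A+B}{2}-I(X)$, with off-diagonal blocks built from $\frac{A-B}{2}$ and $\frac{X+X^*}{2}$.

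Once the right conjugating unitary is identified, the rest is a verbatim transcription of the argument in Corollary \ref{mtlo}: write $M=K^*\begin{pmatrix} L & M\\ M^* & F\end{pmatrix}^2 K$ by taking the positive square root of $N'=KMK^*$, set $\delta=K^*\begin{pmatrix} L & M\\ M^* & F\end{pmatrix}$ and $\psi=\begin{pmatrix} L & M\\ M^* & F\end{pmatrix}K$ so that $M=\delta\psi$, expand $\delta\psi$ and split it as $\Gamma^*\Gamma+\Phi^*\Phi$ where $\Gamma$ has only a nonzero first block-row and $\Phi$ only a nonzero second block-row. Then $\Gamma^*\Gamma$ is unitarily congruent to $\Gamma\Gamma^*=\begin{pmatrix}\ast & 0\\ 0 & 0\end{pmatrix}$ and $\Phi^*\Phi$ to $\Phi\Phi^*=\begin{pmatrix}0 & 0\\ 0 & \ast\end{pmatrix}$, and by tracking which diagonal block of $N'$ each piece reconstructs, the nonzero blocks come out as $\frac{A+B}{2}+I(X)$ and $\frac{A+B}{2}-I(X)$ respectively.

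Alternatively, and more cheaply, one can derive this corollary directly from Corollary \ref{mtlo} by a change of variable: apply \ref{mtlo} to the P.S.D. matrix $\begin{pmatrix} A & iX\\ -iX^* & B\end{pmatrix}=\begin{pmatrix} I & 0\\ 0 & iI\end{pmatrix}\begin{pmatrix} A & X\\ X^* & B\end{pmatrix}\begin{pmatrix} I & 0\\ 0 & -iI\end{pmatrix}$ (which is a unitary congruence, hence P.S.D.), and observe that $R(iX)=\frac{iX+(iX)^*}{2}=\frac{iX-iX^*}{2}=i\cdot\frac{X-X^*}{2}=-I(X)$ — wait, compute carefully: $R(iX)=\frac{iX-iX^*}{2}$, and $I(X)=\frac{X-X^*}{2i}$, so $R(iX)=i\cdot i\, I(X)\cdot\frac{1}{1}$... the bookkeeping here is the one place to be careful, but it reduces to the identity $R(iX)=-I(X)$ up to sign, after which the decomposition of \ref{mtlo} applied to $\begin{pmatrix} A & iX\\ -iX^* & B\end{pmatrix}$ transports back via the diagonal unitary to give exactly the claimed decomposition of $\begin{pmatrix} A & X\\ X^* & B\end{pmatrix}$.

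The main obstacle is purely computational: getting the signs and the factors of $i$ right in the expansion of $K M K^*$ (or equivalently in the identity relating $R(iX)$ and $I(X)$), and making sure the unitary $K$ chosen genuinely realizes the desired diagonal blocks with the correct $\pm I(X)$ placement. Since $I(X)$ is Hermitian, no positivity issue arises in the statement itself; as with the remark following Corollary \ref{mtlo}, the decomposition here also yields $\frac{A+B}{2}\ge \pm I(X)$.
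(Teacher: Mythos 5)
Your proposal is correct and matches the paper's own argument: the paper conjugates $M$ by $J_1=\left(\begin{smallmatrix} I & 0\\ 0 & -iI\end{smallmatrix}\right)$ to replace $X$ by $iX$ and then reruns the proof of Corollary~\ref{mtlo} with the composite unitary $JJ_1$, which is exactly your reduction route (and your sign bookkeeping resolves cleanly to $R(iX)=-I(X)$, giving the blocks $\frac{A+B}{2}\pm I(X)$ as claimed). Your direct computation with $K=\frac{1}{\sqrt2}\left(\begin{smallmatrix} I & iI\\ iI & I\end{smallmatrix}\right)$ also checks out and is only cosmetically different.
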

\begin{proof}
The proof is similar to \cref{mtlo}, we have:  $J_1\begin{pmatrix} A &X \\ X^* & B \end{pmatrix}J_1^*=\begin{pmatrix}A & iX \\-iX^*& B \end{pmatrix}$ where $J_1=\begin{pmatrix}I & 0       \\0& -i I
\end{pmatrix}$, and
 $$ K=JJ_1\begin{pmatrix} A &X \\ X^* & B \end{pmatrix}J_1^*J^*= \begin{pmatrix} \frac{A+B}{2}+I(X) & *\\[0.4cm] * & \frac{A+B}{2}-I(X)\end{pmatrix}  $$      here (*) means an unspecified entry, the proof is similar to that in \cref{mtlo} but for reader's  convenience we give the main headlines:  first factorize $ K$ as a square of positive matrices; that is,  $M=\begin{pmatrix} A &X \\ X^* & B \end{pmatrix}=J_1^*J^* L^2JJ_1$ next  decompose $L^2$ as in \lref{prince} to obtain $$M=J_1^*J^* ( T^*T+S^*S )JJ_1=J_1^*J^* ( T^*T)JJ_1+J_1^*J^*(S^*S )JJ_1$$  where  $TT^*=\begin{pmatrix} \frac{A+B}{2}+I(X) & 0\\ {0} & 0
\end{pmatrix}$   and $SS^*= \begin{pmatrix}0 & 0\\ {0} &\frac{A+B}{2}-I(X)
\end{pmatrix}$
finally the congruence property completes the proof.
\end{proof}
The existence of unitaries $U$ and $V$ in the decomposition process need not to be unique as one can take the special case; that is, $M$   any diagonal matrix with  diagonal entries equals a nonnegative number $k$,   explicitly
$M=kI=U\left(\dfrac{k}{2}I\right)U^*+ V\left(\dfrac{k}{2}I\right)V^*$ for any $U$ and $V$ unitaries. 
\begin{remark}
Notice that from the Courant-Fischer theorem if   $A, B \in {\mathbb{M}}_n^+$, then the eigenvalues of each matrix are the same as the singular values and $A\le B \Longrightarrow \|A\|_k\le \|B\|_k$, for all $k=1,\cdots,n$,
also $A<B \Longrightarrow \|A\|_k< \|B\|_k$, for all $k=1,\cdots,n.$
\end{remark}
\begin{corollary}
For every matrix in ${\mathbb{M}}_{2n}^+ $ written in blocks of the same size, we have: 
$$\begin{pmatrix} A & X\\ {X^*} & B\end{pmatrix}\le\dfrac{1}{2}\bigg\lbrace
 U \begin{pmatrix} A+B+|X-X^*| & 0\\ {0} & 0 \end{pmatrix}U^*+V\begin{pmatrix} 0 & 0\\ {0} &A+B+|X-X^*|\end{pmatrix}V^*\bigg\rbrace$$
for some unitaries $U, V\in {\mathbb{M}}_{n+m}.$
\end{corollary}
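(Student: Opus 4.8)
The plan is to deduce this inequality directly from \cref{woa}, without performing any new factorization. The first ingredient is that $I(X)=\frac{X-X^*}{2i}$ is Hermitian, so that $-|I(X)|\le I(X)\le |I(X)|$; this is the standard fact that $H\le |H|$ and $-H\le |H|$ for Hermitian $H$, read off from the Jordan decomposition $H=H_+-H_-$, $|H|=H_++H_-$ with $H_\pm\ge 0$. The second ingredient is the identity $|I(X)|=\tfrac12|X-X^*|$: since $X-X^*$ is skew-Hermitian, $(X-X^*)^*(X-X^*)=-(X-X^*)^2=(2i\,I(X))^*(2i\,I(X))=4\,I(X)^2$, and taking positive square roots gives $|X-X^*|=2|I(X)|$. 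Combining the two ingredients yields the operator inequalities
\[
\frac{A+B}{2}+I(X)\ \le\ \frac12\big(A+B+|X-X^*|\big)\quad\text{and}\quad \frac{A+B}{2}-I(X)\ \le\ \frac12\big(A+B+|X-X^*|\big).
\]

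Next I would transport these through the unitary congruences appearing in \cref{woa}. Using that $P\le Q$ implies $WPW^*\le WQW^*$ for every unitary $W$, together with the obvious fact that $P_1\le Q_1$ implies $P_1\oplus 0\le Q_1\oplus 0$ and $0\oplus P_1\le 0\oplus Q_1$, the displayed inequalities give
\[
U\begin{pmatrix}\tfrac{A+B}{2}+I(X)&0\\0&0\end{pmatrix}U^*\ \le\ \tfrac12\,U\begin{pmatrix}A+B+|X-X^*|&0\\0&0\end{pmatrix}U^*,
\]
and the analogous bound for the $V$-term with the nonzero block in the lower-right corner. Adding the two resulting inequalities, and invoking \cref{woa} to recognize the sum of the two left-hand sides as $M$, produces exactly the claimed inequality.

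I expect no genuine obstacle here; the only point that needs care is the computation $|I(X)|=\tfrac12|X-X^*|$, which must be carried out at the level of absolute values of matrices, not merely of their norms, along with the elementary monotonicity facts $P\le Q\Rightarrow WPW^*\le WQW^*$ and $P_1\le Q_1\Rightarrow P_1\oplus 0\le Q_1\oplus 0$. Finally, it is worth noting that running the same argument from \cref{mtlo} instead would yield the companion bound with $|X+X^*|$ in place of $|X-X^*|$, so the present statement is the ``imaginary part'' incarnation coming specifically from \cref{woa}; one may of course also take the smaller of the two.
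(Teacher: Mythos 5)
Your proposal is correct and is essentially the paper's argument: the paper's entire proof is the one-line remark that the corollary ``is a consequence of the fact that $I(X)\le |I(X)|$,'' applied to the decomposition of Corollary~\ref{woa}, which is exactly the route you take. Your write-up merely supplies the details the paper omits (the identity $|I(X)|=\tfrac12|X-X^*|$ and the monotonicity of congruence and of direct sums), all of which are sound.
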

\begin{proof}
This a consequence of the fact that $I(X)\le |I(X)|$.
\end{proof}
\section{Symmetric Norms  and Inequalities}
In    \cite{1} they found that if $X$ is hermitian then  \begin{equation}\label{pkm}\|M\|\le \|A+B\|\end{equation} for all symmetric norms. It has been given counter-examples  showing that this does not necessarily holds if $X$ is a  normal but not  Hermitian matrix, the main idea of this section is  to give examples and counter-examples in a general way and to extend the previous inequality to a larger class of P.S.D. matrices written by blocks satisfying \eqref{pkm}.
\begin{theorem}\label{poiu}
If $A$ and $B$ are  positive definite matrices of same size. Then $$
\begin{pmatrix}A&X \\X^*&B\end{pmatrix}>0 \Longleftrightarrow A \ge X B^{-1}X^*$$
\end{theorem}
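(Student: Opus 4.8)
The plan is to reduce $M=\begin{pmatrix}A&X\\X^*&B\end{pmatrix}$ to block--diagonal form by a $*$-congruence and then read off the equivalence from Sylvester's law of inertia. Since $B$ is positive definite it is invertible, so the matrix $W=\begin{pmatrix} I & -XB^{-1}\\ 0 & I\end{pmatrix}\in{\mathbb M}_{2n}$ is well defined and, being unit upper--triangular, invertible. First I would carry out the (routine) Schur--complement multiplication
$$W\begin{pmatrix}A&X\\X^*&B\end{pmatrix}W^*=\begin{pmatrix}A-XB^{-1}X^* & 0\\ 0 & B\end{pmatrix},$$
using $(XB^{-1})^*=B^{-1}X^*$ and the cancellations $X-XB^{-1}B=0$, $X^*-BB^{-1}X^*=0$.

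Next, because $W$ is invertible, the map $N\mapsto WNW^*$ preserves positive definiteness (by a direct vector argument, or by Sylvester's law of inertia). Hence $M>0$ if and only if the right--hand side above is positive definite, and a block--diagonal Hermitian matrix is positive definite exactly when each diagonal block is; one block is $B$, which is positive definite by hypothesis, so $M>0\iff A-XB^{-1}X^*>0$, i.e. the stated equivalence (with the understanding that, under the standing assumption $A,B>0$, the Schur complement is automatically \emph{definite} whenever $M$ is). For a reader who prefers to avoid the congruence, the same two implications follow by completing the square: for $v=(x,y)$ one has $v^*Mv=(y+B^{-1}X^*x)^*B(y+B^{-1}X^*x)+x^*(A-XB^{-1}X^*)x$; choosing $y=-B^{-1}X^*x$ yields the forward direction, and the non-degeneracy of each summand (using $B>0$) yields the converse.

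I expect the only points requiring genuine care — rather than a real obstacle — to be: (i) verifying the congruence identity honestly, keeping track of the off-diagonal cancellations and of $A-XB^{-1}X^*$ in the corner; and (ii) being precise about strict versus non-strict inequalities, namely that it is the positive \emph{definiteness} of $B$ (not mere semidefiniteness) that permits passing freely between $M>0$ and positivity of the Schur complement, and that the displayed ``$\ge$'' must be read in that light.
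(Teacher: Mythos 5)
Your proof is correct and follows essentially the same route as the paper: both reduce $M$ to $\operatorname{diag}(A-XB^{-1}X^*,\,B)$ by the Schur--complement $*$-congruence and invoke the fact that congruence by an invertible matrix preserves positivity. Your remark (ii) about strict versus non-strict inequalities is well taken, since the displayed ``$\ge$'' in the statement should really be ``$>$'' for the equivalence with $M>0$, a point the paper's own proof passes over silently.
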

\begin{proof}
Write $\displaystyle \begin{pmatrix} A &X \\X^* &B \end{pmatrix}=\begin{pmatrix} I &XB^{-1} \\ 0&I \end{pmatrix}\begin{pmatrix} A-XB^{-1}X^* &0\\0 &B \end{pmatrix}\begin{pmatrix} I &0 \\XB^{-1} &I \end{pmatrix}$
where $I$ is the identity matrix, and that complete the proof since for any matrix $A,$  $$A\ge 0 \Longleftrightarrow X^*AX\ge 0,\text{      } \forall X.$$ 
\end{proof}
\begin{theorem}\label{sinj}
Let $M=\begin{pmatrix}A&B\\C&D\end{pmatrix}$ be any square matrix written by blocks of same size, if $AC=CA$ then $\det(M)=\det(AD-CB)$
\end{theorem}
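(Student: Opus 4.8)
The plan is to reduce to the case where the diagonal block $A$ is invertible, prove the identity there by a Schur-complement factorization, and then remove the invertibility hypothesis by a perturbation argument.

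Suppose first that $A$ is invertible. Exactly as in the triangular factorization used to prove \tref{poiu}, write
$$\begin{pmatrix}A&B\\C&D\end{pmatrix}=\begin{pmatrix}I&0\\CA^{-1}&I\end{pmatrix}\begin{pmatrix}A&B\\0&D-CA^{-1}B\end{pmatrix}.$$
The first factor is block lower-triangular with identity diagonal blocks and the second is block upper-triangular, so taking determinants gives $\det(M)=\det(A)\det(D-CA^{-1}B)=\det\big(A(D-CA^{-1}B)\big)=\det(AD-ACA^{-1}B)$. Here the hypothesis $AC=CA$ enters: it gives $ACA^{-1}=CAA^{-1}=C$, whence $\det(M)=\det(AD-CB)$.

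For an arbitrary $A$, set $A_t:=A+tI$ with $t\in\mathbb{C}$. Since $A$ commutes with $C$, so does $A_t$ for every $t$, and $A_t$ is invertible whenever $-t$ is not an eigenvalue of $A$, i.e.\ for all but finitely many $t$. For such $t$ the case already settled gives $\det\begin{pmatrix}A_t&B\\C&D\end{pmatrix}=\det(A_tD-CB)$. Both sides are polynomial functions of $t$ (the matrix entries are affine in $t$ and the determinant is a polynomial in the entries), and they coincide at infinitely many values of $t$, hence identically; evaluating at $t=0$ yields $\det(M)=\det(AD-CB)$.

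The only delicate point is the singular case, and the shift $A\mapsto A+tI$ disposes of it; what makes this work is that the relation $AC=CA$ survives the shift and is exactly what turns $\det(A)\det(D-CA^{-1}B)$ into $\det(AD-CB)$.
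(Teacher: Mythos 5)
Your proof is correct and follows essentially the same route as the paper: a Schur-complement block factorization when $A$ is invertible, with the hypothesis $AC=CA$ used to turn $AD-ACA^{-1}B$ into $AD-CB$, followed by a limiting argument for singular $A$. Your version of the limiting step is in fact more careful than the paper's bare appeal to continuity, since you make explicit that the shift $A\mapsto A+tI$ preserves the commutation with $C$ and that the two sides agree as polynomials in $t$.
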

\begin{proof}
Suppose first that $A$ is invertible, let us write $M$ as \begin{equation}\label{chou}M=\begin{pmatrix}Z&0\\V&I\end{pmatrix}\begin{pmatrix}I&E\\0&F\end{pmatrix}\end{equation} upon calculation we find that: $Z=A,$ $  V=C,$ $ E=A^{-1}B,$ $F=D-CA^{-1}B$
taking the determinant on each side of \eqref{chou}  we get: $$\det(M)=\det(A(D-CA^{-1}B))=\det(AD-CB)$$ the result follows by a continuity argument since the Determinant function is a  continuous function.
\end{proof}
Given the matrix  $M=\begin{pmatrix} A & X\\ {X^*} &0\end{pmatrix}$ a  matrix in  ${\mathbb{M}}_{2n}^+ $  written by blocks of same size,   we know that it $M$ is not P.S.D., to see this   notice  that all  the $2\times 2 $  extracted principle submatrices of $M$  are  P.S.D if  and only if $X=0 $ and $A$	 is positive semi-definite.
Even if a proof of this exists and  would take two lines, it is quite nice to see a different constructive proof, a direct consequence of \lref{prince}.
\begin{theorem}\label{mhan1}
 Given  $\begin{pmatrix} A & X\\ {X^*} & B\end{pmatrix}$ a  matrix in  ${\mathbb{M}}_{2n}^+ $  written in blocks of same size:\begin{enumerate}
\item If $\begin{pmatrix} A & X\\ {X^*} &0\end{pmatrix}$ is positive semi-definite,   $I(X)>0$ or $I(X)<0$, then there exist a matrix $Y$ such that $M=\begin{pmatrix} A & Y\\ {Y^*} &0\end{pmatrix}$ is positive semi-definite and: 
\begin{equation}\label{pls}\left\lVert\begin{pmatrix} A & Y\\ {Y^*} & 0\end{pmatrix}\right\rVert>\left\lVert A\right\rVert  \end{equation}
 for all symmetric norms.  
\item If $\begin{pmatrix} 0 & X\\ {X^*} &B\end{pmatrix}$ is positive semi-definite,   $I(X)>0$ or $I(X)<0$  then there exist a matrix $Y$ such that $M=\begin{pmatrix} 0 & Y\\ {Y^*} &B\end{pmatrix}$ is positive semi-definite and: 
\begin{equation}\label{pla}\left\lVert\begin{pmatrix} 0 & Y\\ {Y^*} & B\end{pmatrix}\right\rVert>\left\lVert B\right\rVert  \end{equation}
\end{enumerate}
The same result holds if we replaced $I(X)$ by $R(X)$ because $\begin{pmatrix} A & iX\\ -i{X^*} & B\end{pmatrix}$ is unitarily congruent to  $\begin{pmatrix} A & X\\ {X^*} & B\end{pmatrix}.$ 
\end{theorem}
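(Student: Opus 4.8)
The plan is to show that in each item the two hypotheses cannot hold at the same time, so that the stated implication is (vacuously) true; put differently, \tref{mhan1} is the advertised ``constructive'' way of seeing that a P.S.D.\ block-matrix with a vanishing diagonal block must have a vanishing off-diagonal block. The engine is \lref{prince} applied to such a matrix, followed by a comparison of Frobenius norms.

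For item~1 I would argue as follows. Suppose $\begin{pmatrix} A & X\\ X^{*} & 0\end{pmatrix}\in{\mathbb{M}}_{2n}^{+}$. Its lower-right block is $0\in{\mathbb{M}}_{n}^{+}$, so \lref{prince} applies and gives $\begin{pmatrix} A & X\\ X^{*} & 0\end{pmatrix}=U\begin{pmatrix} A & 0\\ 0 & 0\end{pmatrix}U^{*}+V\begin{pmatrix} 0 & 0\\ 0 & 0\end{pmatrix}V^{*}=U(A\oplus 0)U^{*}$ for some unitary $U$. Hence $\begin{pmatrix} A & X\\ X^{*} & 0\end{pmatrix}$ is unitarily equivalent to $A\oplus 0$; it has the same singular values, and $\left\lVert\begin{pmatrix} A & X\\ X^{*} & 0\end{pmatrix}\right\rVert=\lVert A\rVert$ for every symmetric norm. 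Specializing to the Frobenius norm and reading off the moduli of the entries, $\lVert A\rVert_{(2)}^{2}=\left\lVert\begin{pmatrix} A & X\\ X^{*} & 0\end{pmatrix}\right\rVert_{(2)}^{2}=\lVert A\rVert_{(2)}^{2}+2\lVert X\rVert_{(2)}^{2}$, which forces $X=0$, hence $I(X)=0$ and $R(X)=0$. This contradicts $I(X)>0$ or $I(X)<0$, so the two hypotheses are incompatible and the implication of item~1 holds vacuously (one may take $Y=X$). Item~2 is the mirror image, with the role of $A$ played by $0$: \lref{prince} makes $\begin{pmatrix} 0 & X\\ X^{*} & B\end{pmatrix}$ unitarily equivalent to $0\oplus B$, and the same Frobenius identity forces $X=0$, against the sign hypothesis on $I(X)$.

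For the final sentence I would reuse the congruence from the proof of \cref{woa}: with the unitary $J_{1}=\begin{pmatrix} I & 0\\ 0 & -iI\end{pmatrix}$ one has $J_{1}\begin{pmatrix} A & X\\ X^{*} & 0\end{pmatrix}J_{1}^{*}=\begin{pmatrix} A & iX\\ -iX^{*} & 0\end{pmatrix}$, so one matrix is P.S.D.\ exactly when the other is, while $I(iX)=\frac{(iX)-(iX)^{*}}{2i}=\frac{iX+iX^{*}}{2i}=\frac{X+X^{*}}{2}=R(X)$; applying item~1 to $\begin{pmatrix} A & iX\\ -iX^{*} & 0\end{pmatrix}$ (and likewise for item~2) then gives the versions with $R(X)$ in place of $I(X)$. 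I do not anticipate a genuine obstacle here: the two things to be careful about are that \lref{prince} really does apply (the zero diagonal block is P.S.D.) and that the two off-diagonal blocks $X$ and $X^{*}$ each contribute $\lVert X\rVert_{(2)}^{2}$ to the squared Frobenius norm, so equality of Frobenius norms does pin $X$ down to $0$.
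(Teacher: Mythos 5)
Your argument is logically valid, but it proves the theorem by a genuinely different route from the paper: you show the two hypotheses of each item can never hold simultaneously, so the implication is vacuously true, whereas the paper takes the hypotheses at face value and \emph{constructs} the witness $Y$. Your key step --- \lref{prince} with $B=0$ makes $\begin{psmallmatrix} A & X\\ X^* & 0\end{psmallmatrix}$ unitarily similar to $A\oplus 0$, and equality of Frobenius norms then forces $X=0$ --- is correct, and it is essentially the ``constructive proof, a direct consequence of \lref{prince}'' that the paper itself advertises just before stating \tref{mhan1}; but the paper defers that conclusion to \tref{mhan2} and wants \tref{mhan1} as an intermediate, non-vacuous step toward it. The paper's own proof instead reduces to the case $I(X)>0$, applies \cref{woa} with $B=0$ to isolate a contribution of $I(X)$ inside the decomposition, observes that $\begin{psmallmatrix} A & lX\\ \bar{l}X^* & 0\end{psmallmatrix}$ is congruent to $\begin{psmallmatrix} A & X\\ X^* & 0\end{psmallmatrix}$ (hence still P.S.D.) for every scalar $l$, and sets $Y=lX$ with $l$ real and large enough that every Ky Fan norm of $M$ strictly exceeds the corresponding norm of $A$; this witness is then played against \rref{impol} to obtain the contradictions in \cref{cvb} and \tref{mhan2}. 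The trade-off is this: the paper's route preserves the intended logical chain (a construction, then a contradiction), while yours derives the non-existence of such matrices first, which makes \tref{mhan1}, \cref{cvb} and \tref{mhan2} all immediate but short-circuits that chain rather than following it. As a proof of the literal statement yours is sound --- and arguably cleaner, since the paper's inequality step after invoking \cref{woa} silently discards the term $-I(X)$ in the second summand in a direction that is not justified when $I(X)>0$ --- but be aware that it is not the argument the paper has in mind, and that it renders the ``existence of $Y$'' content of the theorem empty. Your treatment of the final sentence via $J_1$ and the identity $I(iX)=R(X)$ is fine.
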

\begin{proof}
Without loss of generality we can consider $I(X)>0$ cause $\begin{pmatrix} A & X\\ {X^*} & B\end{pmatrix}$ and $\begin{pmatrix} A & -X\\ -{X^*} & B\end{pmatrix}$  are unitarily congruent, we will show the first statement as the second one has a similar proof, from \cref{woa} we have:  $$\begin{pmatrix} A & X\\ {X^*} &0\end{pmatrix}\ge U\begin{pmatrix} \frac{A}{2} & 0\\ {0} & 0 \end{pmatrix}U^*+U\begin{pmatrix}I(X) & 0\\ {0} & 0  \end{pmatrix}U^*+ V\begin{pmatrix} 0 & 0\\ {0} &\frac{A}{2}\end{pmatrix}V^*$$ 
Since $\begin{pmatrix} A & X\\ {X^*} &0\end{pmatrix}$ is congruent to $L=\begin{pmatrix} A & lX\\ {lX^*} &0\end{pmatrix}$   for any $l\in \CE$, $L$ is P.S.D.
$A$ is a fixed matrix, we have ${\vl U\begin{pmatrix} \frac{A}{2} & 0\\ {0} & 0 \end{pmatrix}U^*+V\begin{pmatrix} 0 & 0\\ {0} &\frac{A}{2}\end{pmatrix}V^*\vi}_k=\beta \|A\|_k$ for some $\beta\le 1$
finally we set $Y=lX$  where $l\in \R$  is large enough to have $ {\|M\|}_k > \|A\|_k,$  $ \forall k$ thus ${\|M\|} > \|A\|$ for all symmetric norms.
\end{proof}
Notice  that there exist a permutation matrix $P$ such that $ P\begin{pmatrix}A&X\\X^*&0  \end{pmatrix} P^{-1}=\begin{pmatrix}0&X^*\\X&A   \end{pmatrix}$ and since $I(X)>0$ if and only if $I(X^*)<0,$ the two assertions of  \tref{mhan1} are equivalent up to a permutation similarity.
\begin{corollary}\label{cvb}
If $M=  \begin{pmatrix} A & X\\ {X^*} & 0\end{pmatrix},$ $A$  a positive semi-definite matrix, and we have one of the following conditions:
\begin{enumerate}
\item $R(X) > 0$
\item $R(X) <0$
\item $I(X) >0$
\item $I(X) < 0 $
\end{enumerate}
Then   $M$  can't be   positive semi-definite. 
\end{corollary}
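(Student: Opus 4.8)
The plan is to derive the statement as a contradiction from \tref{mhan1} and \rref{impol}. Assume, contrary to the claim, that $M=\begin{pmatrix} A & X\\ X^* & 0\end{pmatrix}$ is positive semi-definite while one of the four listed sign conditions holds.

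First I would bring every case into the setting of \tref{mhan1}. Conditions~3 and~4, namely $I(X)>0$ or $I(X)<0$, are already the standing hypotheses of that theorem, so it applies to $M$ directly. For conditions~1 and~2, recall from \tref{mhan1} (equivalently, from the unitary $J_1=\begin{pmatrix} I & 0\\ 0 & -iI\end{pmatrix}$) that $M$ is unitarily congruent to $M':=\begin{pmatrix} A & iX\\ -iX^* & 0\end{pmatrix}$, hence $M'$ is positive semi-definite as well; and a one-line computation with the definitions gives $I(iX)=R(X)$, so one of $I(iX)>0$, $I(iX)<0$ holds and \tref{mhan1} applies to $M'$. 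In either situation \tref{mhan1}(1) furnishes a matrix $Y$ for which $N:=\begin{pmatrix} A & Y\\ Y^* & 0\end{pmatrix}$ is positive semi-definite and $\|N\|>\|A\|$ for every symmetric norm.

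On the other hand, $N$ is a positive semi-definite block-matrix whose diagonal blocks are $A$ and $0$, so \rref{impol} (the consequence of \lref{prince}) yields $\|N\|\le\|A\|+\|0\|=\|A\|$ for every symmetric norm. This contradicts the strict inequality produced just above, so $M$ cannot be positive semi-definite. I do not expect a genuine obstacle here: all the substantive work is already carried out inside \tref{mhan1}, and the only point needing care is the routine bookkeeping that converts the $R(X)$ hypotheses into $I(\cdot)$ hypotheses through the congruence $X\mapsto iX$, using that unitary congruence preserves both positive semi-definiteness and all symmetric norms.
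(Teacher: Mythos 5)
Your proposal is correct and follows essentially the same route as the paper: it derives a contradiction by combining the matrix $Y$ produced in \tref{mhan1} (whose norm strictly exceeds $\|A\|$) with the bound $\|N\|\le\|A\|+\|0\|$ from \rref{impol}. Your explicit reduction of the $R(X)$ cases to the $I(X)$ cases via the congruence $X\mapsto iX$ is exactly the mechanism the paper invokes in the closing remark of \tref{mhan1}, just spelled out in more detail.
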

\begin{proof}
By \rref{impol} any positive semi-definite matrix $M$ written in blocks must satisfy $\|M\| \le\|A\|+\|B\|$ for all symmetric norms which is not the case of the matrix  $M$  constructed in  \tref{mhan1}. 
\end{proof}
Finally we get:
\begin{theorem}\label{mhan2}
If $X\neq 0$ and $B=0$, $A\ge 0$, the matrix $M=  \begin{pmatrix} A & X\\ {X^*} & 0\end{pmatrix}$                  cannot be positive semi-definite. 
\end{theorem}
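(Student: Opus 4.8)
The plan is a short contradiction argument built on \rref{impol}. Assuming $M=\begin{pmatrix} A & X\\ X^* & 0\end{pmatrix}\in\mathbb{M}_{2n}^{+}$ with $X\neq 0$, the lower diagonal block is $B=0$, so \rref{impol} (a consequence of \lref{prince}) would force $\|M\|\le\|A\|+\|B\|=\|A\|$ for every symmetric norm; specializing to the Frobenius norm and using the convention from the introduction that identifies $A\in\mathbb{M}_n$ with $A\oplus 0\in\mathbb{M}_{2n}$, this reads $\|M\|_{(2)}\le\|A\|_{(2)}$.

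Next I would compute $\|M\|_{(2)}$ directly. Since $A\ge 0$ is Hermitian, $M$ is Hermitian, so $\|M\|_{(2)}^{2}=\operatorname{tr}(M^{*}M)=\operatorname{tr}(M^{2})$, and a block multiplication gives
\[
M^{2}=\begin{pmatrix} A^{2}+XX^{*} & AX\\ X^{*}A & X^{*}X\end{pmatrix},\qquad
\operatorname{tr}(M^{2})=\operatorname{tr}(A^{2})+2\operatorname{tr}(X^{*}X)=\|A\|_{(2)}^{2}+2\,\|X\|_{(2)}^{2}.
\]
Because $X\neq 0$ we have $\|X\|_{(2)}^{2}>0$, hence $\|M\|_{(2)}^{2}>\|A\|_{(2)}^{2}$, that is $\|M\|_{(2)}>\|A\|_{(2)}$, contradicting the previous paragraph. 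Therefore $M$ cannot be positive semi-definite. Note that the evaluation of $\operatorname{tr}(M^{2})$ uses nothing about $M$ beyond its block shape; positivity of $M$ enters only through \rref{impol}, which is exactly where the contradiction is produced.

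I do not anticipate a genuine obstacle; the one point worth stating explicitly is that ``$\|A\|$'' here denotes the symmetric norm on $\mathbb{M}_{2n}$ evaluated at $A\oplus 0$, so that both sides of \rref{impol} sit in the same normed space, which the introduction already licenses. For readers who prefer to stay within the constructive circle of ideas around \lref{prince}, an alternative is to reduce to \cref{cvb}: conjugating $M$ by a block-diagonal unitary built from a polar decomposition $X=U|X|$ (with $U$ unitary) turns the off-diagonal block into $U|X|U^{*}\ge 0$; if $X$ is invertible this block is positive definite and \cref{cvb}(1) applies immediately, whereas if $X$ has rank $r<n$ one first passes to the $2r\times 2r$ principal submatrix supported on the range of $U|X|U^{*}$ in each block, on which the off-diagonal block is again positive definite. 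The only slightly delicate step on that second route is choosing the right compression, and the Frobenius-norm argument avoids it altogether.
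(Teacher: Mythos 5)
Your proof is correct, and it takes a genuinely different route from the paper's. The paper's own argument assumes $M$ is positive semi-definite, reduces to the case where $R(X)$ has both positive and negative eigenvalues, applies the decomposition of \cref{mtlo}, and then shifts the off-diagonal block to $Y=X+(\alpha+\epsilon)I$ so as to manufacture a new positive semi-definite matrix $\begin{pmatrix} A+2(\alpha+\epsilon)I & Y\\ Y^* & 0\end{pmatrix}$ with $R(Y)>0$, which contradicts \cref{cvb} (itself resting on \tref{mhan1}). Your argument bypasses all of that machinery: you use only \rref{impol} with $B=0$, which forces $\left\lVert M\right\rVert_{(2)}\le \left\lVert A\right\rVert_{(2)}$, against the exact identity $\left\lVert M\right\rVert_{(2)}^2=\left\lVert A\right\rVert_{(2)}^2+2\left\lVert X\right\rVert_{(2)}^2$ --- which is simply the sum of the squared moduli of the entries of $M$, so even the detour through $\mathrm{tr}(M^2)$ and Hermiticity is optional. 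This is shorter, needs only a single symmetric norm, and is close in spirit to the ``two-line proof'' the paper alludes to just before \tref{mhan1} (the classical one being that a zero diagonal entry of a P.S.D.\ matrix forces its whole row and column to vanish). What the paper's longer proof buys is a demonstration of the Section~2 decomposition machinery in action, which is the stated purpose of the paper; your computation, while airtight, uses none of it. The alternative route you sketch at the end (polar decomposition plus a compression to the range of $|X|$) is the only fragile part of your write-up, and you rightly note that the Frobenius argument makes it unnecessary; I would drop it.
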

 \begin{proof}
Suppose the converse, so $M=  \begin{pmatrix} A & X\\ {X^*} & 0\end{pmatrix}$   is positive semi-definite, without loss of generality the only case   we need to discuss is when $	R(X)$ has positive and negative eigenvalues, by \cref{mtlo} we can write: $$M=U\begin{pmatrix} \frac{A}{2}-R(X) & 0\\ {0} & 0 \end{pmatrix}U^*+V\begin{pmatrix} 0 & 0\\ {0} &\frac{A}{2}+R(X) \end{pmatrix}V^*$$
for some unitaries $U, V\in {\mathbb{M}}_{2n}.$
Now if $R(X)$ has $-\alpha$ the smallest negative eigenvalue $R(X)+(\alpha+\epsilon) I> 0 $ consequently  the matrix \begin{align}
H&=U\begin{pmatrix} \frac{A}{2}-R(X) & 0\\ {0} & 0 \end{pmatrix}U^*+V\begin{pmatrix} 0 & 0\\ {0} &\frac{A}{2}+(\alpha+\epsilon) I+R(X)+ (\alpha+\epsilon) I\end{pmatrix}V^*\\&= \begin{pmatrix} A+2(\alpha+\epsilon) I & X+(\alpha+\epsilon) I\\ (X+(\alpha+\epsilon) I)^* & 0\end{pmatrix}\end{align}
is positive semi-definite with $R(Y)>0$,  where $Y=X+(\alpha+\epsilon) I$, by \cref{cvb} this is a contradiction.
\end{proof}
A natural question would be  how many are the  nontrivial  P.S.D.matrices written by blocks ? The following lemma will show us how to construct some of them.
\begin{lemma}
Let $A$ and $B$ be any $n\times n$ positive definite matrices, then there exist an integer ${t}\ge 1 $ such that the matrix $F_{t}=\begin{pmatrix}tA&X\\X^*&\text{\small{t}}\normalsize{B}\end{pmatrix}$
is positive definite. 
\end{lemma}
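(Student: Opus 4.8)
The plan is to read off positive definiteness of $F_t$ from the Schur-complement criterion of \tref{poiu}. Since $A$ and $B$ are positive definite, $tB$ is positive definite with $(tB)^{-1}=\tfrac1t B^{-1}$ for every $t\ge 1$, so applying \tref{poiu} to $F_t$ gives
$$F_t=\begin{pmatrix}tA&X\\X^*&tB\end{pmatrix}>0\iff tA> X(tB)^{-1}X^*=\tfrac1t\,XB^{-1}X^*,$$
the inequality in \tref{poiu} being read strictly when one wants strict positive definiteness. Multiplying through by $t>0$, this is equivalent to $t^2A> XB^{-1}X^*$, that is, to $A>\tfrac{1}{t^2}XB^{-1}X^*$.

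Now comes the only quantitative point: the right-hand side shrinks like $1/t^2$ while the left-hand side is fixed. Since $A>0$ its smallest eigenvalue $\lambda_{\min}(A)$ is strictly positive, and since $XB^{-1}X^*\ge 0$ we have the crude bound $XB^{-1}X^*\le \|XB^{-1}X^*\|_s\,I$. Choosing an integer $t\ge 1$ with $t^2>\|XB^{-1}X^*\|_s/\lambda_{\min}(A)$ yields
$$\tfrac{1}{t^2}\,XB^{-1}X^*\le \tfrac{1}{t^2}\,\|XB^{-1}X^*\|_s\,I<\lambda_{\min}(A)\,I\le A,$$
hence $A-\tfrac{1}{t^2}XB^{-1}X^*>0$, and \tref{poiu} delivers $F_t>0$.

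A self-contained variant avoids \tref{poiu} altogether: write $F_t=t\begin{pmatrix}A&0\\0&B\end{pmatrix}+\begin{pmatrix}0&X\\X^*&0\end{pmatrix}$, note that the first summand is $\ge t\mu I$ with $\mu=\min\{\lambda_{\min}(A),\lambda_{\min}(B)\}>0$, and that the Hermitian perturbation $\begin{pmatrix}0&X\\X^*&0\end{pmatrix}$ has all its eigenvalues in $[-\|X\|_s,\|X\|_s]$; by Weyl's monotonicity inequality $F_t\ge (t\mu-\|X\|_s)I$, which is positive definite once the integer $t$ exceeds $\|X\|_s/\mu$. Either way there is no real obstacle: the content is simply the observation that in $F_t$ the diagonal blocks grow linearly in $t$ while the coupling term (in Schur-complement form) decays like $1/t$, so a large enough $t$ overpowers it, and the conclusion is a one-line eigenvalue comparison.
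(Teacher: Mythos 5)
Your main argument is correct and is essentially the paper's own proof: both pass through the Schur-complement criterion of \tref{poiu} to reduce positive definiteness of $F_t$ to $t^2A>XB^{-1}X^*$, and then compare the smallest eigenvalue of $A$ with the largest eigenvalue of $XB^{-1}X^*$ --- your condition $t^2>\|XB^{-1}X^*\|_s/\lambda_{\min}(A)$ is exactly the paper's choice of $t$ with $tf(v)>g(w)/t$, where $f(v)=\min_{\|x\|_s=1}x^*Ax=\lambda_{\min}(A)$ and $g(w)=\max_{\|x\|_s=1}x^*XB^{-1}X^*x=\|XB^{-1}X^*\|_s$, the paper merely rederiving the attainment of these extrema by compactness rather than quoting them as eigenvalues. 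Your second, Weyl-based variant $F_t\ge(t\mu-\|X\|_s)I$ is a genuinely different and arguably cleaner route that the paper does not take, but the core proof coincides with the paper's.
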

\begin{proof}
Recall from \tref{poiu} that  $F_{1}$ is positive definite if and only if $A > X B^{-1}X^*,$ which is equivalent to  $ x^*Ax > x^*XB^{-1}X^*x $  for all $ x\in {\CE}^n.$  Set $f(x):= x^*Ax $ and $g(x):=x^*XB^{-1}X^*x$ and let us suppose, to the contrary, that there exist a vector $z$ such that $f(z)\le g(z)$  since $f(x)$ and $g(x)$ are homogeneous functions  of degre $d=2$ over $\R$  if
 $ f(x) \ge g(x)$ for all $x$ such that ${\|x\|}_s=1$ then  $f(x) \ge g(x)$ for any $x\in{\CE}^n.$ So let us set $K=\underset{{\|x\|}_s=1}{\max }g(x),$ and $L=\underset{{\|x\|}_s=1}{\min}f(x)$ 
since $ g(x)$  and $f(x)$ are continuous functions and $ \{x;{\|x\|}_s=1\}$ is  compact, there exist a vector $w$ respectively $v$ such that $K=g(w),$ respectively  $L=f(v).$  Now choose $t\ge1$ such that $tf(v)>\dfrac{ g(w)}{t},$ to obtain $$ x^*(tA)x\ge v^*(tA)v >w^*X(tB)^{-1}X^*w\ge x^*X(tB)^{-1}X^*x$$ for all $x$ such that ${\|x\|}_s=1$, thus $ x^*(tA)x> x^*X(tB)^{-1}X^*x$ for any $x\in 
{\CE}^n$  which completes the proof.
\end{proof}\begin{theorem}\label{pww}
Let $A=\diag({\lambda}_1,\cdots, {\lambda}_n),$  $B=\diag({\nu}_1,\cdots, {\nu}_n)$ and   $M=\begin{pmatrix}A&X\\X^*&B    \end{pmatrix}$ a given  positive semi-definite matrix. If  $X^*$ commute with $A$ and $X^*X$ 	equals a diagonal matrix, then  $$\|M\| \le \|A+B\|$$  for all symmetric norms. The same inequality holds if  $X$ commute with $B$ and $XX^*$ is diagonal.
\end{theorem}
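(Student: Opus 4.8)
The plan is to show that $M$ has exactly the same eigenvalues as a direct sum of $n$ explicit $2\times2$ positive semi-definite matrices whose traces are the diagonal entries $\lambda_i+\nu_i$ of $A+B$, and then to conclude by a short weak-majorization argument. The two structural hypotheses will be used only to carry out this spectral reduction. First I would note that ``$X^{*}$ commutes with $A$'' together with $A=A^{*}$ gives $AX=XA$, so $X$ commutes with each resolvent $(A-tI)^{-1}$, $t$ outside the spectrum of $A$. Writing $A=\diag(\lambda_1,\dots,\lambda_n)$, $B=\diag(\nu_1,\dots,\nu_n)$ (with $\lambda_i,\nu_i\ge0$ since $A,B\ge0$) and $\delta_i:=(X^{*}X)_{ii}\ge0$, the hypothesis says $X^{*}X=\diag(\delta_1,\dots,\delta_n)$.

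Next I would expand $\det(M-tI)$ by the Schur complement of the $(1,1)$-block: for $t\notin\text{spec}(A)$,
\[
\det(M-tI)=\det(A-tI)\,\det\!\big(B-tI-X^{*}(A-tI)^{-1}X\big),
\]
and the commutation gives $X^{*}(A-tI)^{-1}X=X^{*}X\,(A-tI)^{-1}=\diag\!\big(\tfrac{\delta_i}{\lambda_i-t}\big)$. Since $B$ is diagonal the second determinant is $\prod_i\!\big(\nu_i-t-\tfrac{\delta_i}{\lambda_i-t}\big)$, and multiplying by $\det(A-tI)=\prod_i(\lambda_i-t)$ yields the polynomial identity
\[
\det(M-tI)=\prod_{i=1}^{n}\big[(t-\lambda_i)(t-\nu_i)-\delta_i\big]=\prod_{i=1}^{n}\det(N_i-tI),\qquad N_i:=\begin{psmallmatrix}\lambda_i & \sqrt{\delta_i}\\ \sqrt{\delta_i} & \nu_i\end{psmallmatrix}.
\]
Hence $M$ and $N:=\bigoplus_{i=1}^{n}N_i$ have the same characteristic polynomial, hence the same eigenvalues with multiplicities. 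Because $M\ge0$, each quadratic factor $t^{2}-(\lambda_i+\nu_i)t+(\lambda_i\nu_i-\delta_i)$ has nonnegative roots, so $\operatorname{tr}N_i=\lambda_i+\nu_i\ge0$ and $\det N_i=\lambda_i\nu_i-\delta_i\ge0$; being Hermitian, each $N_i$ is positive semi-definite, so $N\ge0$ has the same singular values as $M$, and $\|M\|=\|N\|$ for every symmetric norm.

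It then remains to bound $\|N\|$. Each $N_i$ is a $2\times2$ positive semi-definite matrix with singular values $s_i^{+}\ge s_i^{-}\ge0$ and $s_i^{+}+s_i^{-}=\operatorname{tr}N_i=\lambda_i+\nu_i$, hence $s_i^{+}\le\lambda_i+\nu_i$; thus $(s_i^{+},s_i^{-})$ is weakly majorized by $(\lambda_i+\nu_i,0)$, and since weak majorization is additive over direct sums the full singular-value list of $N$ is weakly majorized by that of $(A+B)\oplus0$. Concretely, a choice of the $k$ largest among the $2n$ numbers $s_i^{\pm}$ takes, for disjoint index sets $S,T$ with $2|S|+|T|=k$, both $s_i^{\pm}$ for $i\in S$ and only $s_i^{+}$ for $i\in T$; its sum is $\sum_{S}(\lambda_i+\nu_i)+\sum_{T}s_i^{+}\le\sum_{S\cup T}(\lambda_i+\nu_i)$ with $|S\cup T|\le k$. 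By Ky Fan's dominance theorem this gives $\|N\|\le\|(A+B)\oplus0\|=\|A+B\|$ for all symmetric norms, the last equality being the reduction to a single space recalled in the introduction; combining, $\|M\|=\|N\|\le\|A+B\|$. The variant in which $X$ commutes with $B$ and $XX^{*}$ is diagonal follows by conjugating $M$ with the block transposition $\begin{psmallmatrix}0 & I\\ I & 0\end{psmallmatrix}$, which turns it into the positive semi-definite matrix $\begin{psmallmatrix}B & X^{*}\\ X & A\end{psmallmatrix}$, preserves all symmetric norms, leaves the sum of the diagonal blocks equal to $A+B$, and exchanges the two sets of hypotheses.

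The crux of the argument — and where I expect the real content to lie — is the Schur-complement factorization of $\det(M-tI)$: it is exactly the commutation with $A$ together with the diagonality of $X^{*}X$ that lets one pull $X^{*}X$ out of $X^{*}(A-tI)^{-1}X$ and collapse the $2n\times2n$ determinant into a product of $n$ scalar quadratics; drop either hypothesis and this breaks. Once $M$ is identified spectrally with a direct sum of $2\times2$ blocks of the correct traces, the norm inequality is soft. The points that need mild care are that the Schur-complement formula, derived off the spectrum of $A$, is a genuine polynomial identity (hence valid for every $t$), and that ``same characteristic polynomial'' upgrades to ``same singular values'' precisely because both $M$ and $N$ are positive semi-definite.
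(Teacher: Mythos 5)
Your proof is correct and follows essentially the same route as the paper: both reduce the characteristic polynomial of $M$ to the product $\prod_{i=1}^{n}\bigl[(t-\lambda_i)(t-\nu_i)-\delta_i\bigr]$ (you via the Schur complement, the paper via its commuting-block determinant formula, \tref{sinj}) and then bound the Ky Fan norms using the fact that the two nonnegative roots of each quadratic sum to $\lambda_i+\nu_i$. Your explicit weak-majorization bookkeeping with the sets $S$ and $T$ supplies a justification for the final step that the paper's proof leaves largely implicit.
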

\begin{proof}
It suffices to prove the inequality for the  Ky Fan $k-$norms  $k=1,\cdots,n,$ let $P=\begin{pmatrix}0&I_{n}\\I_{n}&0\end{pmatrix}$  where $I_{n}$ is the identity matrix of order $n$, since  $\begin{pmatrix}B&X^*\\X&A \end{pmatrix}=P\begin{pmatrix}A&X\\X^*&B    \end{pmatrix}P^{-1}$  and $\begin{pmatrix}A&X\\X^*&B \end{pmatrix}$ have same singular values,  we will discuss only the first case; that is,  when $X^*$ commute with $A$ and $X^*X$ is diagonal, as the second case will follows. Let $D:=X^*X=\begin{psmallmatrix}d_{1} & 0 &\cdots & 0\\ 0 &d_{2} & \cdots & 0\\ \vdots & \vdots & \ddots &\vdots \\
0& 0 & \cdots &d_{n} \end{psmallmatrix},$ as $X^*$ commute with $A,$ from \tref{sinj} we conclude that the eigenvalues of   $\begin{pmatrix}A&X\\X^*&B \end{pmatrix}$ are the roots of $$
 \det ((A-{\mu}I_{n})(B-{\mu}I_{n})-D)=0 $$
Equivalently the eigenvalues are all the solutions of  the $n$ equations:$$\begin{array}{rlrcl}1)&&({\lambda}_1-{\mu})({\nu}_1-{\mu})-d_{1}&=&0
\\[0.1cm]2)&&({\lambda}_2-{\mu})({\nu}_2-{\mu})-d_{2}&=&0
\\[0.1cm]3)&&({\lambda}_3-{\mu})({\nu}_3-{\mu})-d_{3}&=&0\\[0.1cm]\vdots{\text{ }}&&&\vdots&\\[0.1cm]
i)&&({\lambda}_i-{\mu})({\nu}_i-{\mu})-d_{i}&=&0\\\vdots\text{  }&&&\vdots&\\[0.1cm]n)&&({\lambda}_n-{\mu})({\nu}_n-{\mu})-d_{n}&=&0
\end{array}$$
Each equation is of $2^{nd}$ degree, if we denote by $a_i$ and $b_i$ the two solutions of the $i^{th}$ equation we  deduce that:
$$\begin{array}{rcl}a_1+b_1&=&{\lambda}_1+{\nu}_1\\[0.1cm]
a_2+b_2&=&{\lambda}_2+{\nu}_2\\ &\vdots&\\[0.1cm]
a_n+b_n&=&{\lambda}_n+{\nu}_n\\[0.1cm]
\end{array}$$
But $$A+B=\begin{pmatrix}{\lambda}_1+{\nu}_1 & 0 &\cdots & 0\\ 0 &{\lambda}_2+{\nu}_2 & \cdots & 0\\ \vdots & \vdots & \ddots &\vdots \\
0& 0 & \cdots &{\lambda}_n+{\nu}_n  \end{pmatrix}$$
and each diagonal entry of $A+B$ is equal the sum of two nonegative  eigenvalues of $M$, thus we have necessarily: $\|M\|_k\le \|A+B\|_k$ for all $k=1,\cdots,n$ which completes the proof.
\end{proof}
\begin{example}
Let $$M_x=\begin{pmatrix}  x& 0&\dfrac{i}{2}&0\\0&\dfrac{99}{100}&0&-\dfrac{i}{2}\\
-\dfrac{i}{2}& 0& \dfrac{99}{100}&0\\
0&\dfrac{i}{2}& 0&\dfrac{1}{2}\end{pmatrix}$$
 If  $\dfrac{3}{10}\le x\le \dfrac{1}{2}$,  $M_x$ is positive definite and we have: \begin{equation}\label{lkl}\|M_x\|_{}\le \|A+B\|\end{equation}  for all symmetric norms, where $A=\begin{pmatrix}  x& 0\\0&\frac{99}{100} \end{pmatrix}$ and $B=\begin{pmatrix}\frac{99}{100} &0\\0&\frac{1}{2} \end{pmatrix}.$  
If $M_x$ is positive definite for $x=\dfrac{3}{10}$  then  $M_x$ is P.D. for all  $x>\dfrac{3}{10}$.
The eigenvalues of $M_{\frac{3}{10}}$ which are the same as the singular values of $M_{\frac{3}{10}}$ are:\begin{align}{\lambda}_1&=\frac{149}{200}+\frac{\sqrt{12401}}{200}\approx 1.301 \\[0.2cm]{\lambda}_2&=  \frac{129}{200}+\frac{\sqrt{14761}}{200}\approx 1.25  \\[0.2cm]{\lambda}_3&=\frac{149}{200}-\frac{\sqrt{12401}}{200}\approx 0.188 \\[0.2cm]{\lambda}_4&= \frac{129}{200}-\frac{\sqrt{14761}}{200}\approx 0.0375
\end{align} And the \eqref{lkl} inequality  follows from \tref{pww}. 
\end{example}
Let us study the commutation condition in \tref{pww}. First  notice that any square matrix $X =(x_{ij})\in {\mathbb{M}}_n $ will commute with $A=\diag(a_{1},\cdots, a_{n})$  if and only if :
$$Y'=\begin{pmatrix}x_{1,1}a_{1} & x_{1,2}a_2 &\cdots & x_{1,n}a_n\\x_{2,1}a_{1} & x_{2,2}a_2& \cdots & x_{2,n}a_n\\ \vdots & \vdots & \ddots &\vdots \\
x_{n,1}a_1& x_{n,2}a_2 & \cdots &x_{n,n} a_{n} \end{pmatrix}=\begin{pmatrix}x_{1,1}a_{1} & x_{1,2}a_1 &\cdots & x_{1,n}a_1\\x_{2,1}a_{2} &x_{2,2}a_2 &\cdots &x_{2,n}a_2\\ \vdots & \vdots & \ddots &\vdots \\
x_{n,1}a_n& x_{n,2}a_n & \cdots &x_{n,n} a_{n} \end{pmatrix}=Y$$
An $(i,j)$ entry of $Y'$ is equal to that of $Y$ if and only if $x_{i,j}a_j=x_{i,j}a_i,$ i.e. either $a_i=a_j$ or $x_{i,j}=0.$
\begin{corollary}
Let $A=\diag({\lambda}_1,\cdots, {\lambda}_n),$  $B=\diag({\nu}_1,\cdots, {\nu}_n)$  and   $M=\begin{pmatrix}A&X\\X^*&B    \end{pmatrix}$  a given positive semi-definite matrix. If  $X^*$ commute with $A$, or   $X$ commute with $B$, then  $\text{           }  \|M\| \le \|A+B\| \text{         }$ for all symmetric norms.
\end{corollary}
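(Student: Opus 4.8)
The plan is to conjugate $M$ by a suitable unitary matrix so as to bring it into the form covered by the inequality \eqref{pkm} — a positive semi-definite block-matrix whose off-diagonal corner is Hermitian — without changing the right-hand side $\|A+B\|$.

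First I would observe that if $X^{*}$ commutes with the Hermitian matrix $A$, then, taking adjoints, $X$ commutes with $A$ as well. Since $A=\diag(\lambda_{1},\dots,\lambda_{n})$ is diagonal, its eigenspaces are spanned by standard coordinate vectors, and $XA=AX$ forces $X$ to leave each of them invariant; equivalently — as recorded in the computation just before the corollary — after a permutation similarity by $P\oplus P$, which preserves the block form and the singular values of both $M$ and $A+B$, we may assume $A=\bigoplus_{j=1}^{r}c_{j}I_{m_{j}}$ with the $c_{j}$ pairwise distinct and $X=\bigoplus_{j=1}^{r}X_{j}$, $X_{j}\in\mathbb{M}_{m_{j}}$.

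Next, for each $j$ I would choose a singular value decomposition $X_{j}=U_{j}\Sigma_{j}V_{j}^{*}$ and set $W_{j}=V_{j}U_{j}^{*}$, so that $X_{j}W_{j}=U_{j}\Sigma_{j}U_{j}^{*}\ge 0$. Putting $W=\bigoplus_{j}W_{j}\in\mathbb{M}_{n}$, the unitary $W$ is block-diagonal along the eigenspaces of $A$, hence commutes with $A$, while $XW=\bigoplus_{j}U_{j}\Sigma_{j}U_{j}^{*}$ is positive semi-definite, in particular Hermitian. Conjugating by the unitary $\begin{pmatrix}I&0\\0&W\end{pmatrix}$ produces the positive semi-definite matrix $\begin{pmatrix}A&XW\\(XW)^{*}&W^{*}BW\end{pmatrix}$, whose off-diagonal corner is Hermitian; so \eqref{pkm} gives $\|M\|\le\|A+W^{*}BW\|$ for every symmetric norm. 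Since $W$ commutes with $A$, $A+W^{*}BW=W^{*}(A+B)W$ is unitarily equivalent to $A+B$, and therefore $\|M\|\le\|A+B\|$ for all symmetric norms.

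For the case where $X$ commutes with $B$, I would argue as at the start of the proof of \tref{pww}: the matrix $PMP^{-1}=\begin{pmatrix}B&X^{*}\\X&A\end{pmatrix}$, with $P=\begin{pmatrix}0&I\\I&0\end{pmatrix}$, has the same singular values as $M$ and the same right-hand side $\|A+B\|$, and its hypotheses are exactly those of the first case, which has just been settled. The one delicate point is the first step: it is the diagonality of $A$ that confines $X$ to the eigenspaces of $A$, which is precisely what lets the single unitary $W$ both commute with $A$ (so the right-hand side is unaffected) and make the corner $X$ Hermitian (so \eqref{pkm} applies); everything after that is bookkeeping with unitary invariance, and in fact only the diagonality of $A$ is used.
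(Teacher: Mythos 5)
Your proof is correct, and it takes a genuinely different route from the one in the paper. The paper first reduces (via the permutation grouping equal diagonal entries of $A$) to the case $A=\bigoplus_j c_jI_{m_j}$, $X=\bigoplus_j X_j$, then runs the characteristic polynomial through \tref{sinj}, splits it into a system of block equations, and bounds each associated positive semi-definite block $M_j$ by $\|c_jI_{m_j}\|+\|K_j\|$ using \rref{impol} before reassembling the Ky Fan norm estimates. You share the same first reduction, but then replace all of the determinant and eigenvalue bookkeeping by a single one-sided unitary $W=\bigoplus_j V_jU_j^*$ built from singular value decompositions of the blocks $X_j$: conjugating $M$ by $I\oplus W$ makes the corner $XW$ positive semi-definite (in particular Hermitian), so the Bourin--Lee--Lin inequality \eqref{pkm} applies directly, and since $W$ commutes with $A$ the right-hand side $\|A+W^*BW\|=\|W^*(A+B)W\|=\|A+B\|$ is unchanged. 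What your argument buys is brevity and strictly more generality --- as you note, $B$ is never required to be diagonal, and the awkward step in the paper where per-block Ky Fan inequalities must be summed into a global one disappears entirely; what it costs is that it leans on the cited result of \cite{1} for Hermitian off-diagonal blocks, whereas the paper's route is self-contained modulo its own \tref{pww}. Both proofs hinge on the same structural fact, that commutation with the diagonal $A$ confines $X$ to the eigenspaces of $A$.
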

\begin{proof}
As in \tref{pww}, we will assume  without loss of generality that $X^*$ commute with $A,$ as the other case is similar. If $X^*$ is diagonal the result follows from \tref{pww}, suppose there is an off diagonal entry $x_{i,j}$ of $X^*$ different from $0$, from the commutation condition we have $a_i=a_j$ and   the same goes  for all such entries, of course if $AX=XA$ then $$PAXP^{-1}=PXAP^{-1}=PAP^{-1}PXP^{-1}=PXP^{-1}PAP^{-1}=PXAP^{-1}$$  Take $P$  to be the permutation matrix that will order the same diagonal entries of $A$ in a  one diagonal  block and keeps the  matrix $B$ the same,   since $M$  is Hermitian so is $PMP^{-1}$  because we  can consider the   permutation matrix as a product of transposition  matrices  $P_1,\cdots, P_n$ wich are orthogonal;  in other words 
$$PMP^{-1}=P_1P_2\cdots P_n MP_n^{T}\cdots P_2^{T}P_1^{T}.$$ Consequently $P^{T}=P^{-1}$  for any permutation matrix and   $\|M\|=\|PMP^{T}\|$ for all symmetric norms. If $H= PMP^{T},$ $D:=PX$  and $X_i$ is  some $i\times i$ extracted  submatrix of  $X^*$, we will have the block written matrix  $$H=\begin{pmatrix}PAP^{T}&PX\\[0.2cm]{X^*P^{T}} &B\end{pmatrix}=\begin{pmatrix}\begin{psmallmatrix}aI_i & O_j &\cdots & O_s\\ O_i &bI_{j} & \cdots & O_s\\ \vdots & \vdots & \ddots &\vdots \\
O_i&O_j & \cdots &rI_s\end{psmallmatrix}& \begin{psmallmatrix}     X_i^* & O_i &\cdots & O_i\\ O_j &X_{j}^* & \cdots & O_j\\ \vdots & \vdots & \ddots &\vdots \\
O_s&O_s & \cdots &X_{s}^* \end{psmallmatrix}\\[0.8cm] \begin{psmallmatrix}X_i & O_j &\cdots & O_s\\ O_i &X_{j} & \cdots & O_s\\ \vdots & \vdots & \ddots &\vdots \\
O_i&O_j & \cdots &X_s \end{psmallmatrix} &\begin{psmallmatrix}{\nu}_{1} & 0 &\cdots & 0\\ 0 &{\nu}_{2} & \cdots & 0\\ \vdots & \vdots & \ddots &\vdots \\
0& 0 & \cdots &{\nu}_{n}\end{psmallmatrix}\end{pmatrix}$$ where we denoted   the   diagonal matrix of order $i$  whose  diagonal entries are  equal to $a$  by  $aI_{i}$ and  the  zero block of order $i$ by  $O_i.$ Let us  calculate the roots  of the characteristic polynomial of $H$; that is, the roots of
$$\det\left(\begin{psmallmatrix}(a-{\lambda})I_i& O_j &\cdots & O_s\\ O_i &(b-{\lambda})I_{j} & \cdots & O_s\\ \vdots & \vdots & \ddots &\vdots \\
O_i&O_j & \cdots &(r-{\lambda})I_s\end{psmallmatrix}     \begin{psmallmatrix}{\nu}_{1}-{\lambda} & 0 &\cdots & 0\\ 0 &{\nu}_{2}-{\lambda}& \cdots & 0\\ \vdots & \vdots & \ddots &\vdots \\
0& 0 & \cdots &{\nu}_{n}-{\lambda}\end{psmallmatrix}  -    D^*D \right)=0$$  we  translate this to a system of blocks, while each  eigenvalue of $H,$ which is the same as its singular value, will verify one of the following  equations:
$$ \begin{array}{rlrcl}1) & \det\left((a-{\lambda})I_i)\big(\begin{psmallmatrix}{\nu}_{1}-{\lambda} &\cdots & 0\\ \vdots &  \ddots &\vdots \\
0&  \cdots &{\nu}_{i}- {\lambda}              \end{psmallmatrix}\big)-X_{i}^*{X}_i\right)& &=&0
\\[0.4cm]2)&\det\left((b-{\lambda})I_j)\big( \begin{psmallmatrix}{\nu}_{i+1}-{\lambda} &\cdots & 0\\ \vdots &  \ddots &\vdots \\
0&  \cdots &{\nu}_{i+j}- {\lambda}             \end{psmallmatrix}\big)-X_{j}^*{X}_j\right)& &=&0
\\\vdots{\text{ }}&&&\vdots&\\[0.4cm]
c)&\det\left((r-{\lambda})I_s)\big(\begin{psmallmatrix}{\nu}_{n-s}-{\lambda} &\cdots & 0\\ \vdots &  \ddots &\vdots \\
0&  \cdots &{\nu}_{n}- {\lambda} \end{psmallmatrix}\big)-X_{s}^*{X}_s\right)&&=&0
\end{array}\indent (T)\\[0.1cm]$$
where $c$ is the number of diagonal blocks we have. Let us have a closer look to any of the equations above,  without loss of generality we will take the first one, the same will hold for the others, notice   that all eigenvalues ${\lambda}$ are nonnegative and we have  $$M_1=\begin{pmatrix}aI_i& X_i^*\\X_i&\begin{psmallmatrix}{\nu}_{1}&\cdots & 0\\ \vdots &  \ddots &\vdots \\ 0&  \cdots &{\nu}_{i}   \end{psmallmatrix}
\end{pmatrix}=\begin{pmatrix} C_1& X_i^*   \\  X_i^*& K_1         \end{pmatrix} $$ is positive semi-definite because it's eigenvalues are a subset of those  of $M.$ The key idea is that for this matrix $\|C_1+K_1\|=\|C_1\|+\|K_1\|$ for all symmetric norms.
where $C_1= aI_i   $
and $K=\begin{psmallmatrix}{\nu}_{1}&\cdots & 0\\ \vdots &  \ddots &\vdots \\
0&  \cdots &{\nu}_{i}   \end{psmallmatrix}.$ Now back to the system $(T)$ we associate like we did to $M_1$  each equation whose number is  $i$ to  a positive semi-definite matrix $M_i$ 
to obtain by \rref{impol}  $$\begin{array}{rclcl} \|M_1\|_k &\le& \left\|aI_i+\begin{psmallmatrix}{\nu}_{1}&\cdots & 0\\ \vdots &  \ddots &\vdots \\ 0&  \cdots &{\nu}_{i}   \end{psmallmatrix}     \right\|_k&=& \|aI_i\|_k+\left\| \begin{psmallmatrix}{\nu}_{1}&\cdots & 0\\ \vdots &  \ddots &\vdots \\ 0&  \cdots &{\nu}_{i}   \end{psmallmatrix}     \right\|_k
\\[0.65cm] \|M_2\|_k &\le& \left\|bI_j+\begin{psmallmatrix} {\nu}_{i+1}&\cdots & 0\\ \vdots &  \ddots &\vdots \\ 0&  \cdots &{\nu}_{i+j}    \end{psmallmatrix}   \right\|_k &=&\|bI_j\|_k+\left\| \begin{psmallmatrix} {\nu}_{i+1}&\cdots & 0\\ \vdots &  \ddots &\vdots \\ 0&  \cdots &{\nu}_{i+j}    \end{psmallmatrix}   \right\|_k
\\[0.7cm]&\vdots&&\vdots&
\\[0.3cm]\|M_c\|_k &\le& \left\|rI_s+\begin{psmallmatrix}{\nu}_{n-s}&\cdots & 0\\ \vdots &  \ddots &\vdots \\ 0&  \cdots &{\nu}_{n}   \end{psmallmatrix}      \right \|_k&=&\|rI_s\|_k+\left\|\begin{psmallmatrix}{\nu}_{n-s}&\cdots & 0\\ \vdots &  \ddots &\vdots \\ 0&  \cdots &{\nu}_{n}   \end{psmallmatrix}  \right \|_k
\end{array}$$
for all $k,$ but the order of the entries of $B$ are arbitrary chosen,  thus from \tref{pww}  $\|M\|_k\le \|A+B\|_k$ for all $k=1,\cdots,n$ and that completes the proof.
\end{proof}
\begin{corollary}
Let $M=\begin{pmatrix}A&X\\X^*&B    \end{pmatrix}$ be a positive semi-definite matrix written by blocks. There exist a unitary $V$ and a unitary $U$ such that$$ \left\lVert\begin{pmatrix} A & X\\ {X^*} & B\end{pmatrix}\right\rVert \le\|UAU^*+VBV^*\|:=\|A\|+\|B\|$$ for all symmetric norms.
\end{corollary}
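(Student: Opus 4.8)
The plan is to read this corollary as an immediate consequence of the decomposition \lref{prince} and \rref{impol}, once the meaning of the symbols is fixed: since $A\in{\mathbb{M}}_n^+$ and $B\in{\mathbb{M}}_m^+$ occupy the two diagonal corners of $M\in{\mathbb{M}}_{n+m}^+$, the expressions $UAU^{*}$ and $VBV^{*}$ are to be understood as $U(A\oplus 0_m)U^{*}$ and $V(0_n\oplus B)V^{*}$, and the symmetric norm on ${\mathbb{M}}_{n+m}$ is the one it induces on ${\mathbb{M}}_n$ and on ${\mathbb{M}}_m$, as recalled in the Introduction. With this dictionary the quantity $\|A\|+\|B\|$ on the right is exactly the bound of \rref{impol}, so what has to be produced is a single operator whose symmetric norm sits between $\|M\|$ and $\|A\|+\|B\|$.

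First I would invoke \lref{prince}: there are unitaries $U,V\in{\mathbb{M}}_{n+m}$ with $M=U(A\oplus 0_m)U^{*}+V(0_n\oplus B)V^{*}=UAU^{*}+VBV^{*}$, the last equality being just the abbreviation above. In particular $\|M\|=\|UAU^{*}+VBV^{*}\|$, so trivially $\|M\|\le\|UAU^{*}+VBV^{*}\|$ for every symmetric norm; these are the unitaries asserted to exist, and one may take precisely the $U,V$ furnished by the proof of \lref{prince}.

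Next I would bound the middle term from above. By the triangle inequality for symmetric norms, $\|UAU^{*}+VBV^{*}\|\le\|U(A\oplus 0_m)U^{*}\|+\|V(0_n\oplus B)V^{*}\|$; unitary invariance deletes each conjugating unitary, and the norm-induction convention gives $\|A\oplus 0_m\|=\|A\|$ and $\|0_n\oplus B\|=\|B\|$ (the latter after a permutation similarity moving $B$ into a corner, which leaves the singular values, hence every symmetric norm, unchanged). Hence $\|UAU^{*}+VBV^{*}\|\le\|A\|+\|B\|$, and chaining with the previous step yields the asserted $\|M\|\le\|UAU^{*}+VBV^{*}\|\le\|A\|+\|B\|$.

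There is essentially no obstacle here: the content is already packaged in \lref{prince} and \rref{impol}, and the only point needing care is the bookkeeping of block sizes and the reading of $UAU^{*}$, $VBV^{*}$ when $A$ and $B$ live in spaces of possibly different dimensions $n$ and $m$. It is also worth keeping in mind that the symbol ``$:=\|A\|+\|B\|$'' should be read as naming this upper bound rather than as a literal identity valid for every symmetric norm, since the equality $\|UAU^{*}+VBV^{*}\|=\|A\|+\|B\|$ holding simultaneously for all symmetric norms would force $UAU^{*}$ and $VBV^{*}$ to be proportional; the genuine output of the argument is the chain $\|M\|\le\|UAU^{*}+VBV^{*}\|\le\|A\|+\|B\|$.
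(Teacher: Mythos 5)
Your reading of the statement is not the one the paper intends, and under the intended reading your argument does not establish the result. In the paper's proof, $U$ and $V$ are unitaries of the size of the \emph{blocks}: they are chosen to diagonalize $A$ and $B$ separately with the eigenvalues of both arranged in the same (decreasing) order along the diagonal, so that $UAU^*+VBV^*=D_o+G_o$ is a single $n\times n$ positive semi-definite matrix whose $k$-th largest eigenvalue is $\lambda_k+\nu_k$. Hence $\|D_o+G_o\|_k=\|A\|_k+\|B\|_k$ for every Ky Fan norm; combining this with \rref{impol} applied to $QMQ^*$, where $Q=\begin{pmatrix}U&0\\0&V\end{pmatrix}$, gives $\|M\|_k\le\|A\|_k+\|B\|_k=\|UAU^*+VBV^*\|_k$ for all $k$, and Ky Fan dominance then yields $\|M\|\le\|UAU^*+VBV^*\|$ for every symmetric norm. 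The content of the corollary is precisely that the upper bound of \rref{impol} is realized (at least in the Ky Fan sense) by the norms of one concrete $n\times n$ matrix obtained from $A$ and $B$ by unitary conjugation. Your version, which takes $U,V$ to be the $(n+m)\times(n+m)$ unitaries of \lref{prince} so that $UAU^*+VBV^*$ is $M$ itself, turns the first inequality into the tautology $\|M\|\le\|M\|$ and collapses the corollary to a restatement of \rref{impol}; the one idea the statement is actually about --- the choice of block-level unitaries aligning the spectra of $A$ and $B$ --- is missing.

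Your closing observation is nonetheless a fair criticism of the ``$:=$'' as literally written: for the Frobenius norm, $\|D_o+G_o\|_{(2)}=\|D_o\|_{(2)}+\|G_o\|_{(2)}$ forces the ordered spectra to be proportional, so the equality $\|UAU^*+VBV^*\|=\|A\|+\|B\|$ cannot hold for \emph{all} symmetric norms in general, and the paper's phrase ``and thus for all symmetric norms'' overreaches at that point. But the correct reaction is not to discard the equality and retreat to the triangle inequality; it is to note that the equality does hold for all Ky Fan norms with the paper's choice of $U,V$, which is exactly what is needed to carry $\|M\|\le\|UAU^*+VBV^*\|$ over to every symmetric norm.
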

\begin{proof}
Let $U$  and $V$ be two  unitary matrix such that $UAU^*=D_{o}$ and $VBV^*=G_{o}$ where $D_{o}$  and $G_{o}$ are two diagonal matrices having the same ordering ${o},$ of eigenvalues with respect to their indexes  i.e.,  if ${\lambda}_n\le\cdots\le{\lambda}_1$ are the diagonal entries of $D_{o}$, and  ${\nu}_n\le\cdots\le{\nu}_1$ are those of  $G_{o}$, then if  ${\lambda}_i$ is in the $(j,j)$ position then ${\nu}_i$ will be also. Consequently  $\|UAU^*+VBV^*\|=\|D_{o}+G_{o}\|=\|D_{o}\|+\|G_{o}\|=\|A\|+\|B\|,$ for all the Ky-Fan $k-$norms and thus for all symmetric norms. To complete the proof notice  that if $T=UXV^*$  and $Q$ is the unitary matrix $\begin{pmatrix}U&0\\0&V \end{pmatrix},$ by \rref{impol} \begin{align}\left\lVert\begin{pmatrix}A&X\\X^*&B \end{pmatrix}\right\rVert=\left\lVert Q\begin{pmatrix}A&X\\X^*&B \end{pmatrix}Q^*\right\rVert=\left\lVert\begin{pmatrix}D_{o}&T\\{T}^*&G_{o}\end{pmatrix}\right\rVert&\le \|D_{o}\|+\|G_{o}\|\end{align}
 for all symmetric norms.
\end{proof}
\begin{theorem}
Let $M=\begin{pmatrix}A&X\\X^*&B    \end{pmatrix}\ge0,$ if $X$ is normal, $X^*$ commute with $A$ and $X$ commute with $B,$ then we have $\|M\|\le \|A+B\|$ for all symmetric norms.
\end{theorem}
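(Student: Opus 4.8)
The plan is to bring $M$, by a unitary conjugation, into the shape of a positive semi-definite block-matrix whose off-diagonal corner is \emph{Hermitian}, and then invoke the known inequality \eqref{pkm} from \cite{1}. The conjugation has two ingredients: one unitary that diagonalizes the normal corner $X$, and one diagonal unitary that rotates away the phases of the eigenvalues of $X$; the commutation hypotheses are precisely what guarantees that neither conjugation spoils the diagonal blocks $A$ and $B$.

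First I would use normality: write $X=W\Lambda W^{*}$ with $W$ unitary and $\Lambda=\diag(\lambda_{1},\dots ,\lambda_{n})$. With $Q=W\oplus W$ one has $Q^{*}MQ=\begin{pmatrix}\tilde A & \Lambda\\ \Lambda^{*} & \tilde B\end{pmatrix}\ge 0$, where $\tilde A=W^{*}AW\ge 0$, $\tilde B=W^{*}BW\ge 0$; this matrix has the same symmetric norms as $M$ and $\tilde A+\tilde B=W^{*}(A+B)W$. Since $X$ commutes with $B$ (equivalently $X^{*}$ does, as $B$ is Hermitian) we get $\tilde B\Lambda=\Lambda\tilde B$; the hypothesis $X^{*}A=AX^{*}$ similarly gives $\tilde A\Lambda=\Lambda\tilde A$, though only the commutation with $B$ is needed below.

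Next, set $\Omega=\diag(\omega_{1},\dots ,\omega_{n})$ with $\omega_{j}=\bar\lambda_{j}/|\lambda_{j}|$ if $\lambda_{j}\neq 0$ and $\omega_{j}=1$ otherwise. Then $\Omega$ is unitary and $\Lambda\Omega=\diag(|\lambda_{1}|,\dots ,|\lambda_{n}|)=:D$, a real (hence Hermitian) nonnegative diagonal matrix. The key observation is that each $\omega_{j}$ depends only on $\lambda_{j}$, so $\Omega$ is a function of $\Lambda$; therefore $\tilde B$, which commutes with $\Lambda$, commutes with $\Omega$ as well. Conjugating by $R=I_{n}\oplus\Omega$ then gives
$$R^{*}\begin{pmatrix}\tilde A & \Lambda\\ \Lambda^{*} & \tilde B\end{pmatrix}R=\begin{pmatrix}\tilde A & \Lambda\Omega\\ \Omega^{*}\Lambda^{*} & \Omega^{*}\tilde B\Omega\end{pmatrix}=\begin{pmatrix}\tilde A & D\\ D & \tilde B\end{pmatrix}=:N ,$$
using $\Omega^{*}\Lambda^{*}=(\Lambda\Omega)^{*}=D^{*}=D$ and $\Omega^{*}\tilde B\Omega=\tilde B$. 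Now $N$ is a unitary conjugate of $M$, hence positive semi-definite, and its corner $D$ is Hermitian, so \eqref{pkm} yields $\|N\|\le\|\tilde A+\tilde B\|$ for all symmetric norms, whence $\|M\|=\|N\|\le\|\tilde A+\tilde B\|=\|A+B\|$.

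The step I expect to be the crux is making sure the diagonal unitary $\Omega$ that straightens the corner does not alter $\tilde B$: this is exactly what $XB=BX$ provides, via the remark that $\Omega$ is constant on each eigenspace of $\Lambda$. If this were to cause trouble, an alternative using \emph{both} hypotheses is to decompose $\mathbb C^{n}$ into the eigenspaces of the normal matrix $X$ — $A$ and $B$ commute with the associated spectral projections — so that $M$ is unitarily equivalent to a direct sum of blocks $\begin{pmatrix}A_{k} & \lambda_{k}I\\ \bar\lambda_{k}I & B_{k}\end{pmatrix}$; a scalar phase rotation turns each into a positive semi-definite matrix with Hermitian (scalar) corner, \eqref{pkm} applies blockwise, and one concludes by recalling that weak majorization of eigenvalue lists — the Ky Fan dominance exploited throughout the paper — passes to direct sums.
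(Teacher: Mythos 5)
Your proof is correct, and it takes a genuinely different route from the paper's. The paper first assumes the normal matrix $X^*$ has distinct eigenvalues, simultaneously diagonalizes $A$ and $X^*$ by a unitary $U$, observes that $U^*BU$ must then also be diagonal because it commutes with the diagonal matrix $U^*XU$ having distinct entries, applies \tref{pww} to the resulting matrix, and finishes with a continuity argument for general normal $X$. You instead diagonalize only $X$, use $XB=BX$ to see that $\tilde B=W^*BW$ commutes with $\Lambda$ and hence with the phase matrix $\Omega$ (since $\omega_i=\omega_j$ whenever $\lambda_i=\lambda_j$, and $\tilde B_{ij}=0$ otherwise), and conjugate by $I\oplus\Omega$ to make the off-diagonal corner the Hermitian matrix $\diag(|\lambda_j|)$, at which point the Bourin--Lee--Lin inequality \eqref{pkm} applies directly. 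Your argument buys three things: it bypasses the genericity-plus-continuity step (which in the paper is delicate, since one must perturb $X$ within the class of normal matrices commuting with $A$ and $B$ while keeping $M$ positive semi-definite); it does not route through the diagonal-blocks machinery of \tref{pww}; and it shows the hypothesis $X^*A=AX^*$ is not needed at all --- normality of $X$ together with $XB=BX$ alone (or, symmetrically, $X^*A=AX^*$ alone) suffices, so you have in fact proved a stronger statement. The paper's approach, for its part, keeps the result within its own self-contained framework rather than leaning on the cited inequality \eqref{pkm}.
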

\begin{proof}
We consider first that the normal matrix $X^*$ has all of its eigenvalues distinct,  by \tref{l} and the normality condition, there exist a unitary matrix $U$ such that  $U^*AU$ and $U^*X^*U$ are both diagonal. A direct computation shows that: $$\begin{pmatrix}U^*&0\\0&U ^* \end{pmatrix}\begin{pmatrix}A&X\\X^*&B    \end{pmatrix}\begin{pmatrix}U&0\\0&U    \end{pmatrix}=\begin{pmatrix}U^*AU&U^*XU\\U^*X^*U&U^*BU    \end{pmatrix}=\mathcal{G}.$$ Now  $U^*XU$ also commute with $U^*BU,$  since $U^*XU$ is diagonal and all of its diagonal entries are distinct by \rref{ff} $U^*BU$ must be also diagonal, applying \tref{pww} to the matrix $\mathcal{G}$ yields to:$$\left\lVert \begin{pmatrix}A&X\\X^*&B    \end{pmatrix}\right\rVert=\|\mathcal{G
}\|\le \| U^*AU+U^*BU            \|=\|A+B    \|,$$
for all symmetric norms. The inequality holds for any  $X$ normal  by a continuity argument.
\end{proof}
\begin{lemma}\label{plp}
Let $$N=\begin{pmatrix}\begin{psmallmatrix}a_{1} & 0 &\cdots & 0\\ 0 & a_{2} & \cdots & 0\\ \vdots & \vdots & \ddots &\vdots \\
0& 0 & \cdots & a_{n} \end{psmallmatrix}&D\\D^*&\begin{psmallmatrix}b_{1} & 0 &\cdots & 0\\ 0 & b_{2} & \cdots & 0\\ \vdots & \vdots & \ddots &\vdots \\
0& 0 & \cdots & b_{n} \end{psmallmatrix}\end{pmatrix}$$ where $a_1,\cdots, a_{n}$ respectively $b_{1},\cdots, b_{n}$ are  nonnegative respectively  negative real numbers, $A=\begin{psmallmatrix}a_{1} & 0 &\cdots & 0\\ 0 & a_{2} & \cdots & 0\\ \vdots & \vdots & \ddots &\vdots \\0& 0 & \cdots & a_{n} \end{psmallmatrix},$ $B=\begin{psmallmatrix}b_{1} & 0 &\cdots & 0\\ 0 & b_{2} & \cdots & 0\\ \vdots & \vdots & \ddots &\vdots \\
0& 0 & \cdots & b_{n} \end{psmallmatrix}$ and $D$ is any diagonal matrix, then nor $N$  neither $-N$ is  positive semi-definite. Set  $(d_1,\cdots,d_n)$ as the diagonal entries of $D^*D, $ if $a_{i}+b_{i}\ge 0$ and  $a_ib_i-d_i < 0$ for all $i\le n,$              then  $\|N\|>\|A+B\|.$ for all symmetric norms
\end{lemma}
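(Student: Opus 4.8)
The plan is to reduce $N$ to a direct sum of $2\times 2$ blocks, read off its eigenvalues (hence, since $N$ is Hermitian, its singular values), and then compare the two singular-value vectors coordinate by coordinate. Because $A$, $B$ and $D$ are all diagonal, $N$ is permutation-similar to $\bigoplus_{i=1}^{n} N_i$ with $N_i=\begin{pmatrix} a_i & \delta_i\\ \overline{\delta_i} & b_i\end{pmatrix}$, where $\delta_i$ is the $i$-th diagonal entry of $D$, so $|\delta_i|^2=d_i$; likewise $A+B$ is permutation-similar to $\bigoplus_{i=1}^{n}(a_i+b_i)$. (Equivalently, exactly as in the proof of \tref{pww}, \tref{sinj} applies because the blocks $A$ and $D^*$ are diagonal and hence commute, so the eigenvalues of $N$ are the roots of the $n$ quadratics $\mu^2-(a_i+b_i)\mu+(a_ib_i-d_i)=0$.) Each $N_i$ is Hermitian, and the hypothesis $a_ib_i-d_i<0$ says $\det N_i<0$, so $N_i$ has exactly one positive eigenvalue $\mu_i^{+}$ and one negative eigenvalue $\mu_i^{-}$, with $\mu_i^{+}+\mu_i^{-}=a_i+b_i\ge 0$ by the other hypothesis. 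Being Hermitian, $N$ then has singular values $\mu_1^{+},\dots,\mu_n^{+},-\mu_1^{-},\dots,-\mu_n^{-}$, all strictly positive, while $A+B=\diag(a_1+b_1,\dots,a_n+b_n)\ge 0$ has singular values $a_1+b_1,\dots,a_n+b_n$, which under the $\mathbb{M}_{2n}$ convention of the introduction we pad by $n$ zeros.

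This block picture already settles the first assertion: $N$ has $b_1<0$ on its diagonal, so it is not P.S.D.; and $\det(-N_i)=\det N_i=a_ib_i-d_i\le 0$, strictly negative unless $a_i=d_i=0$, so (excluding the trivial case $A=0=D$) some $2\times 2$ compression $-N_i$ of $-N$ is indefinite and $-N$ is not P.S.D. either. For the norm inequality, let $\Phi$ be the symmetric gauge function of the given norm, so that $\|X\|=\Phi$ of the singular values of $X$, and set $u=(a_1+b_1,\dots,a_n+b_n,0,\dots,0)$ and $v=(\mu_1^{+},\dots,\mu_n^{+},-\mu_1^{-},\dots,-\mu_n^{-})$, so that $\|A+B\|=\Phi(u)$ and $\|N\|=\Phi(v)$. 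For $1\le i\le n$ we have $u_i=\mu_i^{+}+\mu_i^{-}<\mu_i^{+}=v_i$ and $u_{n+i}=0<-\mu_i^{-}=v_{n+i}$, both strict because $\mu_i^{-}<0$; moreover $0\le u$ and $v>0$ coordinatewise.

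Now I would finish by absorbing a scalar: put $t:=\max_{j}u_j/v_j$, which lies in $[0,1)$ since each $v_j>0$ and each $u_j/v_j<1$, and note $u\le tv$ coordinatewise. By monotonicity and homogeneity of $\Phi$, $\|A+B\|=\Phi(u)\le\Phi(tv)=t\,\Phi(v)<\Phi(v)=\|N\|$, since $t<1$ and $\Phi(v)>0$; and this holds for every symmetric norm at once. The only delicate point is precisely this last step, the passage from "$u\le v$" — which by monotonicity alone gives merely $\le$ — to a strict inequality uniform over all symmetric norms; what makes it go through is that, under $a_ib_i-d_i<0$, every coordinate of $v$ is strictly positive and strictly dominates the corresponding coordinate of $u$, which is exactly what allows the contracting factor $t<1$.
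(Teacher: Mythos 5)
Your proof is correct and follows essentially the same route as the paper: both reduce $N$ to the $n$ quadratics $\mu^2-(a_i+b_i)\mu+(a_ib_i-d_i)=0$ (via \tref{sinj}, or equivalently the permutation-similarity to a direct sum of $2\times 2$ Hermitian blocks) and then compare the resulting roots with the diagonal entries $a_i+b_i$ of $A+B$. If anything your write-up is more careful than the paper's: the final scalar-absorption step ($u\le tv$ coordinatewise with $t<1$, giving $\Phi(u)\le t\Phi(v)<\Phi(v)$) supplies the justification for passing from strict Ky Fan inequalities to strictness for \emph{all} symmetric norms, which the paper asserts without proof, and your observation that $-N$ can actually be P.S.D. in the degenerate case $A=D=0$ flags a genuine (if trivial) exception to the first assertion that the paper overlooks.
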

\begin{proof}
The diagonal  of $N$ has  negative and   positive  numbers, thus nor $N$  neither $-N$ is  positive semi-definite, now any two diagonal matrices will commute, in particular $D^*$ and $A,$ by applying \tref{sinj} we get that  the eigenvalues of $N$ are the roots of $$\det((A-{\mu}I_{n})(B-{\mu}I_{n})-D^*D)=0$$
Equivalently the eigenvalues are all the  solutions  of the $n$ equations:$$ \begin{array}{rlrcl}1)&&({a}_1-{\mu})({b}_1-{\mu})-d_{1}&=&0
\\[0.1cm]2)& &({a}_2-{\mu})({b}_2-{\mu})-d_{2}&=&0
\\[0.1cm]3)&&\text{  }({a}_3-{\mu})({b}_3-{\mu})-d_{3}&=&0
\\\vdots{\text{ }}&&&\vdots&\\
i)&&({a}_i-{\mu})({b}_i-{\mu})-d_{n}&=&0\\\vdots\text{  }&&&\vdots&\\[0.1cm]
n)&&({a}_n-{\mu})({b}_n-{\mu})-d_{n}&=&0
\end{array}\indent (S)\\[0.1cm]$$
 Let us denote by $x_i$ and $y_i$  the two solutions of the {$i^{th}$} equation then:
$$\begin{array}{rclcl}x_1+y_1&=&a_1+b_1&\ge&0\\[0.1cm]
x_2+y_2&=&a_2+b_2&\ge& 0\\ &\vdots&&\vdots&\\[0.1cm]
x_n+y_n&=&a_n+b_n&\ge &0\\[0.1cm]
\end{array}\indent \begin{array}{rclcl}x_1y_1&=&a_1b_1-d_1&<&0\\[0.1cm]
x_2y_2&=&a_2b_2-d_2&<& 0\\ &\vdots& &\vdots&\\[0.1cm]
x_ny_n&=&a_nb_n-d_n&< &0\\[0.1cm]
\end{array} $$
This implies that each equation of $(S)$ has one negative  and one positive solution,  their sum is positive,  thus the positive root is bigger or equal than the negative one. Since  $A+B=\begin{psmallmatrix}a_{1}+b_{1} &\cdots & 0\\ \vdots &  \ddots &\vdots \\
0&  \cdots &a_{n}+ b_{n} \end{psmallmatrix},$  summing over  indexes we see that $\|N\|_k>\|A+B\|_k$ for $k=1,\cdots,n$ which yields to   $\|N\|>\|A+B\|$ for all symmetric norms
\end{proof}
It seems easy to construct examples of  non P.S.D matrices $N$ written in blocks such that   $\displaystyle
 {\|N\|}_{s}>{\|A+B\|}_{s},$ let us have a  look of such inequality for P.S.D. matrices.
\begin{example} Let $$C=\begin{pmatrix} \dfrac{4}{3} & 0&1&-1\\0&1&0&\dfrac{1}{5}\\
1& 0& \dfrac{3}{2}&0\\-1&\dfrac{1}{5}& 0&{2}\end{pmatrix}=\begin{pmatrix}A&X\\X^*&B\end{pmatrix}$$  where $A=\begin{pmatrix}\frac{4}{3} &0\\ 0&1\end{pmatrix},$   $B=\begin{pmatrix}\frac{3}{2}&0\\0&2\end{pmatrix}.$ Since the eigenvalues of $C$ are all positive with  $\displaystyle{\lambda}_1\approx 3.008,\text{   } {\lambda}_2\approx 1.7 ,\text{       }{\lambda}_3\approx 0.9, \text{   }{\lambda}_4\approx 0.089$, $C$ is positive definite  and we verify  that $$3.008\approx {\|C\|}_s>{\|A+B\|}_s=3$$
\end{example}
\begin{example}
Let $$N_y=\begin{pmatrix}  2& 0&0&2\\0&y&0&0\\0& 0& 1&0\\2&0& 0&{2}\end{pmatrix}=\begin{pmatrix}A&X\\X^*&B\end{pmatrix}$$ where  $A=\begin{pmatrix}2 &0\\ 0&y\end{pmatrix}$ and    $B=\begin{pmatrix}1&0\\0&2\end{pmatrix}.$   The eigenvalues of $N_y$ are the numbers: $\displaystyle{\lambda}_1=4,\text{   } {\lambda}_2=1,\text{       }{\lambda}_3=y, \text{   }{\lambda}_4=0,$ thus if $y\ge 0,$ $N_y$ is positive semi-definite  and for all $ y$ such that   $0\le y < 1$ we have
\begin{enumerate}
\item $4={\|N_y\|}_s>{\|A+B\|}_s=3$
\item $ 16 +y^2+1={\| N\|}_{(2)}^2>{\|A+B\|}_{(2)}^2=4(3+y)+y^2+1$
\end{enumerate}
\end{example}
\newpage


\begin{thebibliography}{11}
\bibitem {1}J.~C.~Bourin, E.~Y.~Lee, and M.~Lin, \emph{On a decomposition lemma for positive semi-definite block-matrices}, Linear Algebra and its Applications 437, pp.1906--1912, (2012).
  \bibitem {2} J.~C.~Bourin, F.~Hiai, \emph{Norm and anti-norm inequalities for positive semi-definite matrices }, Internat.J.Math.63, pp.1121-1138, (2011).
\end{thebibliography}
\end{document}